    \numberwithin{equation}{subsection}
    \newtheorem{propo}{Proposition}[section]
    \newtheorem{corol}[propo]{Corollary}
    \newtheorem{theor}[propo]{Theorem}
    \newtheorem{lemma}[propo]{Lemma}
    \theoremstyle{definition}
    \theoremstyle{remark}
\newcommand{\ZZ}{\mathbb{Z}}
\newcommand{\RR}{\mathbb{R}}
 \newcommand{\G}{\mathcal{G}}
 \newcommand{\E}{\mathcal{D}}
 \newcommand{\F}{\mathcal{E}}
 \newcommand{\A}{\mathcal{A}}
  \newcommand{\B}{\mathcal{B}}
 \newcommand{\I}{\mathcal{I}}
\newcommand{\sal}{\mathcal{S}}
  \newcommand{\D}{\mathcal{D}}
\newcommand{\Ker}{\operatorname{Ker}}
\newcommand{\card}{\operatorname{card}}
\newcommand{\id}{\operatorname{id}}
\newcommand{\rel}{\operatorname{rel}}
\newcommand{\supp}{\operatorname{supp}}
\let\oldmarginpar\marginpar
\renewcommand\marginpar[1]{\oldmarginpar{\footnotesize #1}}
\begin{document}

    \title[{Matching groups   and gliding systems}]{Matching groups   and gliding systems}

    \author[Vladimir Turaev]{Vladimir Turaev}
    \address{
    Vladimir Turaev \newline
    \indent   Department of Mathematics \newline
    \indent  Indiana University \newline
    \indent Bloomington IN47405, USA\newline
    \indent $\mathtt{vturaev@yahoo.com}$}

                     \begin{abstract}  With every matching  in a   graph we associate a group   called the matching group. We study this group using the theory of nonpositively curved cubed complexes.   Our approach  is formulated  in terms of so-called gliding systems.
 \end{abstract}\footnote{AMS Subject
                    Classification:  05C10, 20F36, 20F67, 57M15, 82B20}
                     \maketitle

   \section {Introduction}

 Consider a    graph ${\Gamma}$ without loops but possibly with multiple edges.   A  matching~$A$  in~$\Gamma$  is a  set of edges of~$\Gamma$  such that different edges in~$A$ have no common vertices. Matchings are extensively studied in graph theory usually with the view to  define  numerical invariants of graphs.  In this paper we study
    transformations of matchings  determined by even cycles. An even cycle  in~$\Gamma$ is an embedded circle  in~$\Gamma$  formed by an even number of edges. If   a matching~$A$     meets an even cycle~$s$ at every second edge of~$s$, then removing these edges from~$A$   and adding   instead all the other edges of~$s$  we
obtain a new matching  denoted  $sA$. We say that $sA$ is obtained
from $A$ by {\it gliding} along~$s$. The inverse transformation is the  gliding of~$sA$ along~$s$ which,  obviously, gives  back $A$. Composing  the glidings, we can  pass  back and forth between  matchings.     If two even cycles $s,t$   have no common   vertices and   a matching~$A$ meets both $s$ and $t$  at every second edge, then the compositions $A\mapsto sA \mapsto tsA$ and $A\mapsto tA \mapsto stA=tsA$ are considered as the same transformation.  For any matching~$A$ in~$\Gamma$,  the compositions of glidings carrying~$A$ to itself form a group $\pi_A=\pi_A(\Gamma) $  called the \emph{matching group}.
 Similar groups were first considered in \cite{STCR} in the context of domino tilings of planar regions.

 In the rest of the introduction, we focus on matching groups in   finite graphs.  We prove that  they are torsion-free,
residually nilpotent, residually finite, biorderable, biautomatic,
    have solvable word and conjugacy
problems, satisfy   the Tits alternative,     embed in $SL_n(\ZZ)$
for some~$n$, and embed  in    finitely generated
right-handed Artin groups.  Our main tool in the proof of these properties is an interpretation of the matching groups as the fundamental groups of  nonpositively curved cubed complexes. The universal coverings of such complexes are   Cartan-Alexandrov-Toponogov (0)-spaces in the sense of Gromov
    (CAT(0)-spaces).   All necessary definitions   from the theory of   cubed complexes are recalled in the paper.

Using much more elementary considerations, we    give  a presentation  of the matching group    by generators and relations as follows. The set of vertices of a finite graph~$\Gamma$ adjacent to the edges of a matching~$A$ in~$\Gamma$ is denoted $\partial A$.
We say that two matchings $A,B $ in~$\Gamma$ are {\it congruent}    if $\partial A=\partial B$. We explain that any tuple of matchings in~$\Gamma$   congruent to a given matching $A_0$  determines an element in $\pi_{A_0}$.  The group    $\pi_{A_0}$ is generated by  the elements
$\{x_{A, B}\}_{A, B }$ associated with the 2-tuples $A, B$ of matchings congruent to~$A_0$.  The defining relations:  $x_{A_0,A}=1$ for any $A $ congruent to~$ A_0$ and  $x_{A,C}=x_{A,B} \, x_{B,C}$ for any     matchings  $A,B,C $ congruent to~$ A_0$ such that  every vertex in $ \partial A_0$ is incident to an edge which belongs to at least two of the matchings  $A,B,C $.  As a consequence,  the group $\pi_{A_0}$  is finitely generated and its rank is smaller than or equal to  $M(M-1)/2$  where $M$ is the number of matchings in~$\Gamma$ congruent to~$A_0$ and distinct from~$A_0$.

  We    define  two families of natural   homomorphisms between   matching groups.
  First,  any subset $A'$ of a matching $A$ in~$\Gamma$  is itself a   matching  in~$\Gamma$. We define a canonical injection $ \pi_{A'}\hookrightarrow \pi_{A}$. Identifying  $\pi_{A'}$ with its image, one can    treat $\pi_{A'}$   as a subgroup of $\pi_A$.  Second,  any two congruent matchings $A,B$ in~$\Gamma$ may be related by glidings, and, as a consequence, their matching groups are isomorphic. We
 exhibit  a canonical  isomorphism $ \pi_A \approx \pi_B$.
    We  also relate the matching groups   to  the braid groups of   graphs. This allows us to derive  braids in graphs  from tuples of matchings.  We will briefly discuss a  generalization of  the matching groups   to hypergraphs.

    A special role in the theory of matchings is played by   perfect matchings also called dimer coverings. A matching in a graph is  perfect   if   every vertex of the graph  is incident to a (unique) edge of this matching.  Perfect matchings have been extensively studied   in connection with   exactly solvable models of statistical mechanics and  with path algebras, see \cite{Bo},   \cite{Ken2} and  references therein.   The matching groups associated with perfect matchings are called \emph{dimer groups}.
Since  all  perfect matchings in a finite graph   are congruent, their    dimer  groups are isomorphic.
The resulting isomorphism class of   groups   is an invariant of the graph.



    The study of  glidings    suggests a more general   framework of  gliding systems in groups.  A {\it gliding
system} in a group $G$ consists of certain elements of~$G$ called  {\it  glides} and a    relation on the set of glides called {\it independence} satisfying a few   axioms.   Given a gliding system in $G$ and a  set $\E \subset G$, we construct a cubed complex~$X_\E$ called the {\it glide complex}.       The fundamental groups of the components of~$X_\E$ are  the {\it glide groups}. We formulate  conditions   ensuring that $X_\E$ is nonpositively curved. One can view gliding systems as  devices producing   nonpositively curved complexes and interesting groups.
 The matching groups  and, in particular, the dimer groups   are   instances  of   glide groups for appropriate~$G$ and~$\E$.

The paper is organized as follows. In Section~\ref{Poccacm} we recall  the basics on cubed complexes and cubic maps. The next three sections
 deal with glidings: we define the gliding systems (Section~\ref{sect-prelim}), construct the glide complexes (Section~\ref{ From glides to cubed   complexes}), and  study   natural   maps between the glide groups (Section~\ref{incmaps}).   Next, we introduce    dimer groups (Section~\ref{dimergroups}),   compute them via  generators and relations (Section~\ref{redu}), and   define and study the matching groups     (Section~\ref{gencasem}). In Section~\ref{extension-----} we consider connections with braid groups. In Section~\ref{lloccgccgcgl}  we interpret the dimer complex  in terms of graph labelings.   In Section~\ref{extension}  we   discuss the matching groups of hypergraphs. In the appendix we  examine the typing homomorphisms of the matching groups.

The author would like to thank  M. Ciucu for several stimulating discussions. This work   was partially supported by the NSF
  grant  DMS-1202335.

\section{Preliminaries on cubed complexes and cubical maps}\label{Poccacm}

We discuss the basics of the theory of
  cubed complexes  and cubical maps, see
\cite{BH}, Chapters I.7 and II.5 for more details.


\subsection{Cubed complexes}\label{Cubed complexes} Set $I=[0,1]$. A {\it cubed complex}  is a
CW-complex $X$   such that each (closed) $k$-cell of $X$ with $k\geq
0$ is a continuous map   from the $k$-dimensional cube $I^k$ to $X$
whose restriction  to the interior of $I^k$ is injective and whose
restriction  to each $(k-1)$-face of $I^k$ is an isometry of that
face onto $I^{k-1}$ composed with a $(k-1)$-cell $I^{k-1}\to X$ of
$X$.  The $k$-cells $I^k \to X$   are not required to be injective.
The $k$-skeleton  $X^{(k)}$  of $X$ is the union of the images of all
 cells of dimension  $ \leq k$.

 For example, the cube $I^k$ together with all its faces is a cubed complex.
So is the $k$-dimensional torus obtained by identifying opposite faces of $I^k$.

The {\it link} $LK(A)=LK(A;X)$ of a 0-cell $A $ of a cubed complex
$X$ is the space of all directions at $A$. Each triple ($k\geq 1$, a
vertex $a$ of $I^k$,   a $k$-cell $\alpha:I^k \to X $ of $X$
carrying $a$ to $A$) determines a  $(k-1)$-dimensional simplex in
$LK(A)$ in the obvious way. The faces of this simplex are determined
by the restrictions of $\alpha$ to the faces of $I^k$
containing~$a$. The simplices corresponding to all triples
$(k,a,\alpha)$ cover $LK(A)$ but may not form a simplicial complex.
We say, following \cite{HW}, that the cubed complex $X$ is {\it
simple} if the links of all $A\in X^{(0)}$ are    simplicial complexes,
i.e., all   simplices  in $LK(A)$    are embedded and
the intersection of any two simplices in $LK(A)$  is a common face.


A {\it flag complex} is a simplicial complex such that any finite
collection of pairwise adjacent vertices spans a simplex.  A cubed
complex is {\it nonpositively curved} if it is simple and the link
of each 0-cell is a flag complex. A theorem of M. Gromov asserts
that the universal covering of a  connected finite-dimensional
nonpositively curved cubed complex    is   a CAT(0)-space.
Since
CAT(0)-spaces are contractible, all higher homotopy groups of  such a complex~$X$
vanish while the fundamental group $\pi=\pi_1(X)$ is torsion-free. This group
     satisfies a strong form of the Tits alternative: each
subgroup of $\pi $ contains a rank 2 free subgroup or    virtually is a
finitely generated abelian group, see \cite{SW}. Also,  $\pi$ does
not have Kazhdan's property (T), see \cite{NR1}. If $X$ is  compact,
then $\pi$ has solvable word and conjugacy problems and is
biautomatic, see \cite{NR2}.

\subsection{Cubical maps}\label{llocal}   A {\it cubical map} between simple cubed complexes $X$ and $Y$
  is a continuous map $X\to Y$ whose composition
with any $k$-cell $I^k\to X$ of $X$ expands  as   the composition of a
self-isometry of $I^k$ with a $k$-cell $I^k\to Y$ of~$Y$ for all
$k\geq 0$. For  any $A\in X^{(0)}$, a cubical   map $f: X\to Y$ induces a
simplicial
  map $f_A:LK(A)\to LK(f(A))$.   A cubical map
$f: X\to Y$   is a {\it local isometry} if  for all $A\in X^{(0)}$, the map~$f_A$
is an embedding and its image is  a full subcomplex of $LK(f(A))$. The latter condition means  that  every simplex  of $LK(f(A))$ whose vertices are images under $f_A$ of vertices of  $LK(A)$ is itself the image under $f_A$ of a simplex of  $LK(A)$.

 \begin{lemma}\label{Wise} \cite[Theorem 1.2]{CW} If $f:X\to Y$ is a local isometry of nonpositively curved finite-dimensional
 cubed
 complexes, then the induced group homomorphism $f_*: \pi_1(X,A)\to \pi_1(Y,
 f(A))$ is injective for all $A\in X$.
   \end{lemma}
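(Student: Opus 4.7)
The plan is to pass to universal covers, where the CAT(0) geometry of $\tilde{Y}$ lets us upgrade the combinatorial local isometry hypothesis on $f$ to a global isometric embedding $\tilde{f}$, and then to transport this injectivity down to the fundamental groups. To begin, I would lift $f$ to a cubical map $\tilde{f}:\tilde{X}\to\tilde{Y}$, available because $\tilde{X}$ is simply connected. Since the universal covers $p:\tilde{X}\to X$ and $q:\tilde{Y}\to Y$ are themselves local cubical isometries, the local isometry property of $f$ transfers to $\tilde{f}$: for any $0$-cell $\tilde{A}$ of $\tilde{X}$, the induced map $\tilde{f}_{\tilde{A}}$ on links is canonically identified with $f_{p(\tilde{A})}$ and is therefore an embedding onto a full subcomplex. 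By the theorem of Gromov recalled in Section \ref{Cubed complexes}, both $\tilde{X}$ and $\tilde{Y}$ are CAT(0)-spaces in their natural path-length metrics.

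The core step is to promote $\tilde{f}$ to a global isometric embedding. Given $\tilde{x}_0,\tilde{x}_1\in\tilde{X}$, let $\tilde{\gamma}$ be the unique geodesic joining them, which exists because $\tilde{X}$ is CAT(0). I would verify that $\tilde{f}\circ\tilde{\gamma}$ is a local geodesic in $\tilde{Y}$: the restriction of $\tilde{f}$ to each cube is an isometry, so the only point to check is at the $0$-cells $\tilde{A}$ that $\tilde{\gamma}$ passes through. At such a point the incoming and outgoing tangent directions of $\tilde{\gamma}$ are antipodal in the spherical link metric on $LK(\tilde{A};\tilde{X})$; the full-subcomplex condition on $\tilde{f}_{\tilde{A}}$ forces their images to remain antipodal in $LK(\tilde{f}(\tilde{A});\tilde{Y})$, because any shorter connecting arc in the target link would span a simplex already lying in the image and hence pull back to a shortcut for $\tilde{\gamma}$, contradicting its minimality. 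Local geodesics in a CAT(0)-space are global geodesics of the same length, so $\tilde{f}$ preserves distances and in particular is injective.

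Finally, to deduce injectivity of $f_*$, I would take $[\gamma]\in\Ker f_*$ represented by a loop $\gamma$ at $A$. Because $f\circ\gamma$ is null-homotopic in $Y$, every lift of $f\circ\gamma$ to $\tilde{Y}$ is closed. Choosing a lift $\tilde{A}$ of $A$ and lifting $\gamma$ to a path $\tilde{\gamma}$ in $\tilde{X}$ starting at $\tilde{A}$, the composition $\tilde{f}\circ\tilde{\gamma}$ is such a lift, so $\tilde{f}(\tilde{\gamma}(0))=\tilde{f}(\tilde{\gamma}(1))$; injectivity of $\tilde{f}$ then forces $\tilde{\gamma}$ itself to be closed, giving $[\gamma]=1$ in $\pi_1(X,A)$. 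The hard part of this plan is the middle step: translating the combinatorial "embedding onto a full subcomplex" condition on links into the metric statement that antipodal directions map to antipodal directions in the CAT(1) link. The lifting procedure and the passage from injectivity of $\tilde{f}$ to that of $f_*$ are formal; this metric-combinatorial translation, which uses in an essential way that the links of $0$-cells in a nonpositively curved cubed complex are CAT(1) flag complexes with edges of length $\pi/2$, is where the substance of the cited theorem lies.
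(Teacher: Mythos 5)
The paper does not prove this lemma; it is quoted verbatim from Crisp--Wiest \cite{CW} (Theorem 1.2), and no argument is given in the text. Your blind proof reconstructs the standard strategy used in that literature and in Haglund--Wise: pass to universal covers, upgrade the combinatorial local-isometry hypothesis to a global metric isometric embedding using CAT(0) geometry, and read off $\pi_1$-injectivity. The lifting step and the final deduction from injectivity of $\tilde f$ are exactly as you say, and your identification of the middle step as the substance of the result is accurate.

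Two remarks on the middle step. First, the claim that ``the only point to check is at the $0$-cells'' should be justified: a geodesic in a cubed complex typically crosses cube boundaries through interiors of positive-dimensional faces, not only through vertices. This case is in fact harmless -- $\tilde f$ is an isometry on each cube, and the injectivity of $\tilde f_{\tilde A}$ on vertex links rules out folding two cubes sharing a face onto the same image cube, so the local-geodesic condition at such crossings comes for free -- but it needs to be stated. Second, and more substantially, the assertion that the full-subcomplex condition on $\tilde f_{\tilde A}$ forces directions at link-distance $\geq \pi$ to remain at link-distance $\geq \pi$ is the nontrivial combinatorial content of the cited theorem, and the justification you offer (``any shorter connecting arc in the target link would span a simplex already lying in the image and hence pull back to a shortcut'') is too vague to carry the argument. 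A geodesic of length $<\pi$ in an all-right CAT(1) flag complex generally passes through interiors of several simplices and does not literally ``span a simplex''; what is actually needed is the convexity statement that a full subcomplex $L$ of such a complex $K$ satisfies $d_L(u,v)=d_K(u,v)$ whenever $d_K(u,v)<\pi$. Establishing this requires the explicit description of geodesics in all-right spherical complexes (or an equivalent argument) and is not a formal consequence of flagness alone. So your proposal is a correct and well-organized outline, following the same route as the reference the paper cites, but the key inequality is asserted rather than proved, and that is precisely where the work of \cite{CW} lies.
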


 \section{Glides}\label{sect-prelim}

 \subsection{Gliding systems} \label{BU} For a   subset $\G$ of a group $ G$, we set $\overline \G=\{s^{-1}\, \vert \, s\in \G \}  \subset G$. A  {\it  gliding system} in a group $G$ is  a pair of sets
 $$\G\subset G \setminus \{1\} , \quad \I\subset \G\times \G$$ such that $\overline \G=\G$ and  for any $s,{t}\in \G$ with
  $(s , {t}) \in \I$, we have     $(s^{-1} , {t}),
  ({t},s)\in
 \I$, and $s {t}={t}s \neq 1$.
The elements of $\G$ are called
  {\it glides}.   The inverse of a glide is a glide while the unit   $1\in G$ is never a glide. The set $\I$ determines   a symmetric  relation on~$\G$
    called {\it
 independence}.  Thus, two glides $s,t$ are independent if and only if $(s,t)\in \I$.  For any independent glides   $s,t$, we have  $st=ts$, $  s\neq t^{\pm 1}  $, and $(s^{\varepsilon} ,   {t}^{\mu}), ({t}^{\mu} ,s^{\varepsilon}) \in \I$ for all $\varepsilon, \mu =\pm 1$. In particular,  a glide is never independent from itself or
its  inverse.  We do not require the glides   to generate~$G$ as a group.

Let $(\G  , \I )$ be  a gliding system  in a group~$G$.  For   $s\in \G$  and    $A \in G$,  we say that $sA\in G$ is
 obtained from $A$ by {\it (left) gliding along}~$s$. One can
 similarly consider right glidings but we do not need them.

A   subset      $S $ of $ \G$ is {\it  pre-cubic} if it is finite and $(s,t)\in \I$ for any distinct $s,t\in S$.  Since  independent
glides commute, such a set  $S$ determines
an element $[S]=\prod_{s\in S} s$ of~$ G$. In particular, the empty set   $\emptyset \subset \G$ is pre-cubic and
$[\emptyset]=1$.

  A  set  $S\subset \G$ is  {\it cubic} if it is    pre-cubic   and for any distinct  subsets
$T_1,T_2 $ of~$  S$, we have  $[T_1]\neq [T_2]$.   In particular,
 $[T]\neq [\emptyset]=1$  for any non-empty
$T\subset S$. Examples of cubic sets of glides: $\emptyset$;
 a set consisting of a single glide;
a   set consisting of two independent glides.  It is clear that any subset  of a cubic set of glides is cubic.

  \begin{lemma}\label{preglidi-}  For each subset $T$ of a (pre-)cubic set of glides $S \subset G$, the set  of glides
  $S_T=(S\setminus T)\cup \overline T$ is    (pre-)cubic where $\overline T=\{t^{-1}\, \vert \, t\in T \}  \subset G$.\end{lemma}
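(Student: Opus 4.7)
My strategy is to handle pre-cubicity and cubicity separately, with the cubic case reduced to the cubicity of~$S$ via a right-translation bijection by~$[T]$.

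For pre-cubicity, I will first observe that $(S\setminus T)\cap \overline T = \emptyset$: were some $t\in T$ to satisfy $t^{-1}\in S\setminus T$, then $t$ and $t^{-1}$ would be distinct elements of~$S$, yet a glide is never independent from its inverse, contradicting pre-cubicity of~$S$. Hence $S_T = (S\setminus T)\sqcup \overline T$ is finite. To verify pairwise independence of the elements of~$S_T$, I will take two distinct elements and split into cases according to whether each lies in $S\setminus T$ or in~$\overline T$: for two elements of $S\setminus T$ this is just pre-cubicity of~$S$; for two elements of~$\overline T$ it follows from independence of the corresponding pair in~$T\subset S$ together with the axiom $(s,t)\in\I \Rightarrow (s^{-1},t), (t,s)\in \I$; and the mixed case is handled by the same axiom applied once.

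For the cubic case, I plan to write every $V\subset S_T$ as $V = V_1\sqcup V_2$ with $V_1 = V\cap(S\setminus T)$ and $V_2 = V\cap \overline T$, and to define $\gamma:2^{S_T}\to 2^S$ by
\[
\gamma(V) = V_1\cup (T\setminus \overline{V_2}).
\]
This is visibly a bijection, whose inverse sends $U\subset S$ to $(U\setminus T)\cup \overline{T\setminus U}$. The crux is the identity
\[
[\gamma(V)] = [V]\cdot [T],
\]
which I will obtain by computing inside the pre-cubic set~$S$: independent glides commute, so $[T] = [\overline{V_2}]\cdot [T\setminus \overline{V_2}]$ and $[V_2] = [\overline{V_2}]^{-1}$, whence $[T\setminus \overline{V_2}] = [V_2]\cdot [T]$. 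Multiplying by $[V_1]$ on the left and using $V_1\cap(T\setminus \overline{V_2})=\emptyset$ gives $[\gamma(V)] = [V_1]\cdot [V_2]\cdot [T] = [V]\cdot [T]$.

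Once this identity is in hand, cubicity of $S_T$ follows at once: if $V\neq V'$ are distinct subsets of~$S_T$, then $\gamma(V)\neq \gamma(V')$ are distinct subsets of~$S$, so by cubicity of~$S$ we have $[\gamma(V)]\neq [\gamma(V')]$, and cancelling $[T]$ on the right gives $[V]\neq [V']$. I expect the only nontrivial step to be spotting the translation bijection~$\gamma$ and verifying the identity $[\gamma(V)]=[V]\cdot [T]$; the rest is routine bookkeeping with the gliding-system axioms.
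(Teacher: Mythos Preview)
Your proof is correct and follows essentially the same strategy as the paper: decompose each subset of $S_T$ into its $S\setminus T$-part and its $\overline{T}$-part, translate the problem to subsets of the cubic set~$S$, and invoke cubicity there. The paper's bookkeeping is slightly different---given $T_1,T_2\subset S_T$ with $[T_1]=[T_2]$, it sets $U_i=(S\setminus T)\cap T_i$ and $V_i=T\cap\overline{T_i}$, derives $[U_1\cup V_2]=[U_2\cup V_1]$, and then recovers $U_1=U_2$, $V_1=V_2$ by intersecting with $S\setminus T$ and~$T$---whereas your bijection $\gamma$ with the identity $[\gamma(V)]=[V]\cdot[T]$ packages the same reduction more cleanly and avoids the cross-index manipulation.
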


\begin{proof} That $S_T$ is finite and consists of pairwise independent glides follows from the definitions. We need   to prove that if $S$ is cubic, then so it $S_T$. It suffices to  show that $[T_1]=[T_2] \Longrightarrow T_1=T_2$ for any subsets $T_1, T_2 $ of $ S_T$. For $i=1,2$,   put $U_i=(S\setminus T) \cap T_i$ and $V_i=T\cap  \overline T_i$. Clearly, $ \overline  V_i=  \overline  T\cap  T_i$ and $ U_i \cup \overline V_i=T_i$. The independence of the elements of $S$ implies that  the sets $S\setminus T$ and $\overline T$ are disjoint. Therefore their subsets $U_i $ and $\overline V_i$ are disjoint and
$$[T_i]= [U_i \cup \overline V_i]=[U_i] [\overline V_i] = [ U_i] [V_i]^{-1}  . $$  The assumption $[T_1]=[T_2]$ implies that $[ U_1] [V_1]^{-1}=[ U_2] [V_2]^{-1}$. The sets  $V_1, V_2$ are subsets of $T\subset S$ and therefore the elements $[V_1], [V_2]$ of~$G$ commute. Hence
$[U_1] [V_2]= [ U_2]   [V_1]$. The equalities $U_1 \cap V_2=U_2\cap V_1 =\emptyset$ imply that
$$ [ U_1 \cup V_2] = [U_1] [V_2]= [ U_2]   [V_1]=[U_2\cup V_1] .$$
Since     $U_1 \cup V_2$ and $U_2\cup V_1$ are subsets  of the cubic set $S $, they must be equal.
So,
$$U_1=(U_1 \cup V_2) \cap (S\setminus T) = (U_2 \cup V_1) \cap (S\setminus T)=U_2$$
and
$$V_1= (U_2 \cup V_1) \cap T= (U_1\cup V_2) \cap T = V_2.$$
We conclude that $T_1=U_1\cup \overline V_1 =U_2\cup \overline V_2 =T_2$.
\end{proof}

\subsection{Examples}\label{exam} 1.   For any group $G$, the following pair is a   gliding
system:  $\G= G \setminus \{1\}$  and $\I$ is the set of all pairs
$(s,t)\in \G\times \G$ such that $s\neq t^{\pm 1}$ and $st=ts$.

2. For a   group~$G$ and a  set $\G \subset G \setminus \{1\}$ such that $\overline \G=\G$, the pair
$(\G , \I=\emptyset)$ is a   gliding
system. The cubic subsets of $\G$ are the empty set and the 1-element  subsets.


3. Let $G $ be a free abelian group   with   free commuting generators $\{g_i\}_i$. Then
$$\G=\{ g_i^{\pm 1} \}_{i}, \quad \I=\{(g_i^\varepsilon,g_j^\mu)
\, \vert \, \varepsilon=\pm 1,
\mu =\pm 1,   i\neq j\}$$ is a
  gliding system in $G$. A cubic subset of $\G$ consists of a finite number of    $ g_i $ and a finite number of   $ g_j^{-1} $ with $i\neq j$.

4. A   generalization of the previous example is provided by the
theory of right-angled Artin groups (see \cite{Ch} for an
exposition). A right-angled Artin group is a group allowing a
presentation  by generators and relations in which all relators are
commutators of the generators. Any graph ${\Gamma}$ with the set of
vertices $V$ determines a right-angled Artin group $G=G({\Gamma})$
with generators $\{g_s\}_{s\in V}$   and relations $g_s g_t= g_t
g_s$   whenever   $s, t \in V$ are connected by an
edge in~${\Gamma}$ (we write then $s\leftrightarrow t$). Abelianizing
$G$ we obtain that $g_s\neq g_t^{\pm 1}$ for $s\neq t$. The pair
$$ \G=\{ g_s^{\pm 1} \}_{s\in V}, \quad
\I=\{(g_s^\varepsilon, g_t^\mu)\, \vert \, \varepsilon , \mu
=\pm 1, s,t \in V, s\neq t, s\leftrightarrow t
 \}$$  is a
gliding system in $G$.

5.  Let $E$ be  a set  and     $ G= 2^E$   be the power set of $E$ consisting of all
subsets of $E$. We define multiplication  in $G$ by  $ A  B = (A\cup
B)\setminus (A\cap B)$  for   $A,B\subset E$. This   turns $G$
into an abelian   group with unit $1=\emptyset$, the {\it power group of
  $E$}.  Clearly, $A^{-1}=A$ for all $A\in G$. Pick any set
$\G\subset G\setminus \{1\}$ and declare
  elements of~$\G$   independent when  they are disjoint as
subsets of~$E$. This gives  a     gliding system in~$G$. A cubic subset of $\G$ is just a finite collection of pairwise disjoint non-empty subsets of~$E$.

6.  Let $E$ be a set, $H$ be a multiplicative group, and  $G=H^E$ be the group of
all maps $E\to H$ with pointwise multiplication. The {\it support} of a map $f:E\to H$ is the set $\supp(f)=
  f^{-1} (H\setminus \{1\})\subset E$. Pick a   set $\G\subset G\setminus \{1\}$ invariant under inversion and  declare
  elements of $\G$   independent if
   their supports are disjoint. This gives
  a
gliding system in $G$. When $H$ is a cyclic group of order~$2$, we
recover Example 5 via the group isomorphism  $H^E\simeq 2^E$ carrying a map
$E\to H$ to its support.

\subsection{Regular gliding systems}\label{regg}  A gliding system   is {\it regular} if all pre-cubic sets of  glides in this system  are cubic.  We will be mainly  interested in regular   gliding systems. The gliding system  in Example~\ref{exam}.1 may be non-regular while
those  in Examples~\ref{exam}.2--6 are  regular.
The regularity in Examples~\ref{exam}.5 and~\ref{exam}.6  is a  consequence  of the following     lemma.

\begin{lemma}\label{glidi-}  Let    $E$ be a set  and $H$ be a  group. Consider a  gliding
system in the group $G=H^E$ such that  the supports of any two  independent glides
  are disjoint  (as subsets of $E$).  Then this gliding system
is regular.
\end{lemma}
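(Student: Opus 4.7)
The plan is to exploit the pointwise structure of $G = H^E$ together with the disjoint-support hypothesis to show that each pre-cubic set $S$ is already cubic, by recovering the subset $T \subset S$ from the value $[T] \in G$.

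First I would record the basic support calculation: if $S = \{s_1,\dots,s_n\} \subset \G$ is pre-cubic, then by hypothesis the sets $\supp(s_1),\dots,\supp(s_n)$ are pairwise disjoint. For any $T \subset S$ and any $e \in E$, the product $[T] = \prod_{s\in T} s \in H^E$ evaluates as $[T](e) = \prod_{s\in T} s(e)$, and all but at most one factor equals $1 \in H$ because the supports are disjoint. Consequently
\[
[T](e) \;=\; \begin{cases} s(e) & \text{if } e \in \supp(s) \text{ for some (unique) } s \in T, \\ 1 & \text{if } e \notin \bigcup_{s \in T} \supp(s), \end{cases}
\]
and $\supp([T]) = \bigsqcup_{s \in T} \supp(s)$.

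Next I would use this to show that $T$ can be reconstructed from $[T]$. Suppose $T_1, T_2 \subset S$ satisfy $[T_1] = [T_2]$. Given any $s \in T_1$, pick $e \in \supp(s)$ (nonempty because $s \neq 1$). Then $[T_1](e) = s(e) \neq 1$, so $[T_2](e) \neq 1$, so $e$ lies in $\supp(t)$ for some $t \in T_2$. But $s$ and $t$ both belong to $S$ whose members have pairwise disjoint supports, and $e \in \supp(s) \cap \supp(t)$, forcing $s = t$. Hence $T_1 \subset T_2$, and symmetry yields $T_1 = T_2$. Thus $[T_1] = [T_2] \Longrightarrow T_1 = T_2$, which is exactly the cubic condition.

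I do not anticipate any serious obstacle: the argument is a direct unpacking of the definitions, and the key point is simply that disjointness of supports turns the product in $H^E$ into a coordinatewise ``either $1$ or $s(e)$'' behavior, from which $T$ is read off by locating the coordinates on which $[T]$ is nontrivial and assigning them to the unique $s \in S$ whose support contains them. The only mild subtlety to state explicitly is that each $s \in \G$ has nonempty support (since $s \neq 1$), which is what guarantees we can produce a witness $e \in \supp(s)$ in the recovery step.
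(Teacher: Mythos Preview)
Your proof is correct and follows essentially the same approach as the paper: both arguments hinge on the observation that $\supp([T])=\bigsqcup_{s\in T}\supp(s)$ for any $T\subset S$, and then use nonemptiness of $\supp(s)$ to distinguish subsets. The paper phrases the final step contrapositively (pick $t\in T_1\setminus T_2$ and exhibit a point of $\supp([T_1])\setminus\supp([T_2])$), whereas you argue the direct implication $T_1\subset T_2$; this is a cosmetic difference only.
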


\begin{proof}     Consider an arbitrary  pre-cubic set of glides  $S\subset G$.  Any subset $T$ of $ S$ is also pre-cubic. Since the supports of   independent glides
  are disjoint,    $$\supp([T])= \amalg_{s\in T}  \supp (s)\subset E.$$      Consider two distinct subsets $T_1,T_2$   of $S$. Assume for
concreteness that    $T_1 \setminus T_2 \neq \emptyset$. Pick   $t\in T_1 \setminus T_2$. Then $\supp(t) \subset \supp([T_1])$ and  $\supp(t)  \cap \supp([T_2]) =\emptyset$. Since~$t$ is a glide, $\supp(t)\neq \emptyset$. Hence $[T_1]\neq [T_2]$. This proves that the set $S$ is cubic.
\end{proof}

\section{ Glide    complexes and glide groups}\label{ From glides to cubed   complexes}

In  this section, $G$ is a group equipped with a gliding system. We define a cubed complex~$
X_G$, the {\it glide complex},  and study certain  cubed  subcomplexes of $X_G$.

\subsection{The glide  complex}\label{cubecomplexX}   A {\it based cube} in $G$ is a pair  ($A\in G$, a cubic set of
 glides $S\subset G$). The integer $k=\card(S)\geq 0$ is the {\it dimension} of the based cube
$(A,S)$. It follows from the definition of a cubic set of glides that  the set $\{[T]A\}_{T\subset S} \subset G$ has~$2^k$
elements; these elements are called  the {\it vertices}
of the based cube $(A,S)$.

  Two based cubes $(A,S)$ and $(A', S')$ are   {\it equivalent} if there is a set $T\subset S$ such that
$A'=[T]A$ and $S'=S_T$ where $S_T$ is   defined in Lemma~\ref{preglidi-}. This is
indeed an equivalence relation on the set of based cubes. Each $k$-dimensional based cube is equivalent to $2^k$
based cubes (including itself). The equivalence classes of
$k$-dimensional based cubes are called {\it $k$-dimensional  cubes} or {\it $k$-cubes}
in $G$. Since equivalent based cubes have the same vertices, we may
speak of the vertices of a $k$-cube. The $0$-cubes in $G$ are just the elements of $G$.

A cube $Q$ in $G$ is a {\it face} of a cube $Q'$  in $G$  if $Q,Q'$ may be
represented by based cubes $(A,S)$, $(A', S')$, respectively, such
that $A=A'$ and $S\subset S'$. Note that  in the role of $A$ one
may take an arbitrary vertex of $Q$.

The  glide complex  $X_G$ is the cubed complex obtained by taking a
copy of $I^{k}$ for each    $k$-cube    in $G$ with
$k\geq 0$ and gluing these copies via
 identifications determined by inclusions of     cubes   into bigger     cubes   as their faces.
Here is a more precise definition.   A point of $X_G$ is represented
by a triple $(A,S,x\in I^S)$ where $(A,S)$ is a  based cube in $G$ and  $I^S$
is  the set of all maps $ S \to I $ viewed as the product of
copies of~$I$ numerated by elements of $S$ and provided with the
product topology. For fixed $(A, S)$,    the  triples $(A,S,x)$ form the
geometric cube $I^S$.  We take a
disjoint union of these cubes over all   $(A,S )$ and factorize it
by the equivalence relation generated by the  relation
$(A,S,x) \sim (A',S',x')$ when $$ A=A',\,\, S\subset S', \,\, x=x'\vert_{S},\,\,
x'(S'\setminus S)=0$$ or there is a set $T\subset S$ such that
$$A'=[T]A, \,\, S'=S_T , \,\, x'\vert_{S'\setminus T'}=x\vert_{S\setminus T} \,\,   {\text{and}}\,\, x(t)+ x'(t^{-1})=1  \,\,  {\text{for all}}\,\,  {t\in T}. $$The  quotient space $X_G$
is a cubed space in the obvious way.

 \begin{lemma}\label{newlemmw-}  All cubes in~$G$ are embedded in $X_G$. \end{lemma}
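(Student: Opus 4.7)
The goal is to show that for every based cube $(A, S)$ the characteristic map $I^S \to X_G$ sending $x$ to $[(A, S, x)]$ is injective; equivalently, if $(A, S, x_1) \sim (A, S, x_2)$ then $x_1 = x_2$. My strategy is to attach to each representative $(A, S, x)$ a \emph{carrier datum} and verify that it is invariant under the two generating relations defining $\sim$. Writing $T_0(x), T_1(x), S^o(x) \subset S$ for the subsets on which $x$ equals $0$, equals $1$, or takes a value in the open interval $(0,1)$, the carrier datum consists of the based cube $\bigl([T_1(x)]A,\, S^o(x)\bigr)$ together with the interior coordinate map $x|_{S^o(x)} \in (0,1)^{S^o(x)}$, considered up to the based-cube equivalence acting simultaneously on the cube and on the coordinate map. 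Intuitively, this records the smallest face of $(A,S)$ containing the point, together with the point's position in that face.

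Invariance under the face-inclusion relation $(A, S, x) \sim (A, S', x')$ with $S \subset S'$, $x = x'|_S$ and $x'(S' \setminus S) = 0$ is immediate: $T_1$, $S^o$ and the restriction of $x$ are unchanged. For the change-of-base relation $(A, S, x) \sim ([T]A, S_T, x')$ with $T \subset S$, partition $T = T^0 \sqcup T^1 \sqcup T^o$ according to whether $x|_T$ equals $0$, equals $1$, or is interior. Pairwise commutativity of $S$-elements and the cancellation $[\overline{T^0}][T^0] = 1$ yield
\[
T_1(x') = (T_1(x) \setminus T) \cup \overline{T^0}, \qquad S^o(x') = (S^o(x) \setminus T) \cup \overline{T^o},
\]
together with $[T_1(x')]\cdot[T] = [T_1(x)]\cdot[T^o]$. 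Setting $U = T^o \subset S^o(x)$, this says exactly that the new carrier based cube is the image of the old one under the based-cube equivalence along $U$. The coordinate rule $x'(t^{-1}) = 1 - x(t)$ for $t \in T^o$ (and $x' = x$ elsewhere) matches the transformation of the interior coordinates along $U$, so the whole carrier datum is preserved.

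Suppose now $(A, S, x_1) \sim (A, S, x_2)$. Invariance of the carrier datum produces $U \subset S^o(x_1)$ with $[T_1(x_2)]A = [U][T_1(x_1)]A$ and $S^o(x_2) = (S^o(x_1))_U$. Since $U$ and $T_1(x_1)$ are disjoint subsets of the cubic set $S$ whose elements commute, $[U][T_1(x_1)] = [U \cup T_1(x_1)]$; cubicity of $S$ then forces $T_1(x_2) = U \cup T_1(x_1)$. Swapping the roles of $x_1$ and $x_2$ gives the opposite inclusion, so $T_1(x_1) = T_1(x_2)$, $U = \emptyset$, $S^o(x_1) = S^o(x_2)$, and $T_0(x_1) = T_0(x_2)$. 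The equality of the interior coordinates, transported along $U = \emptyset$, then gives $x_1 = x_2$. The main subtlety is the change-of-base invariance: the new carrier base vertex must differ from the old one by a product over a subset of $S^o(x)$ (not merely of $S$), and this is exactly what the identity $[\overline{T^0}][T] = [T^1][T^o]$ delivers.
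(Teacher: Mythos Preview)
Your proof is correct. The carrier datum you track—the minimal face containing the point, together with the interior coordinate, taken up to based-cube equivalence—is exactly the open cell of $X_G$ in which the image point lies, plus the point itself; your invariance check is a hands-on verification that this cell decomposition is well defined. The paper's proof is a one-sentence sketch of the same idea (``different faces of a $k$-cube have different sets of vertices and therefore are never glued''), so the approaches coincide, with yours supplying the details: the key use of cubicity of $S$ to force $T_1(x_2)=U\cup T_1(x_1)$ is precisely the step that makes the paper's ``different vertex sets'' remark effective. One small simplification you could note: once you have $S^o(x_2)=(S^o(x_1))_U$ with both $S^o(x_i)\subset S$, the inclusion $\overline{U}\subset S$ together with the fact that a glide is never independent from its inverse forces every $u\in U$ to be an involution, hence $S^o(x_1)=S^o(x_2)$, and then $U\subset T_1(x_2)\cap S^o(x_2)=\emptyset$ directly—so the symmetry swap is not strictly needed.
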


\begin{proof} This   follows from the fact that different faces of a $k$-cube   have different sets of vertices and therefore are never glued to each other under our identifications.   \end{proof}

\subsection{Subcomplexes of $X_G$}\label{cubecomplexX+}   Any
 set $\E\subset G$  determines  a  cubed complex $X_\E \subset X_G$ formed
by the cubes in $G$ whose  all vertices belong to~$\E$.  Such cubes
are called {\it cubes in $\E$}, and   $X_\E$ is called the {\it
glide complex} of $\E$.
For $A\in
\E$, the fundamental group $\pi_1(X_\E, A)$ is called  the {\it glide group   of $\E$ at $A$}. The  elements of
$\E$ related by glidings  in $\E$  belong to the same component of~$X_\E$ and give rise to isomorphic glide groups.


 \begin{lemma}\label{simple}    For any set $\E\subset G$, the  cubed complex $X_\E$ is simple in the sense of Section~\ref{Cubed complexes}.
   \end{lemma}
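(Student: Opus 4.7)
The plan is to give $LK(A)$ an explicit simplicial description and then verify the two axioms of a simplicial complex: embeddedness of each simplex, and the intersection of two simplices being a common face. Fix $A \in \E$ and let $\sal_A$ denote the collection of non-empty cubic subsets $S \subset \G$ such that $[T]A \in \E$ for every $T \subset S$. Since $[T]A = A$ forces $[T] = 1$, hence $T = \emptyset$ by cubicity of $S$, every cube of $X_\E$ with $A$ as a vertex is represented by a unique based cube of the form $(A, S)$ with $S \in \sal_A$. Each such $S$ contributes one simplex $\sigma_S$ of $LK(A)$, of dimension $|S| - 1$, whose vertices are the directions at $A$ of the edges $(A, \{s\})$ for $s \in S$.

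To check that each $\sigma_S$ is embedded in $LK(A)$, I first verify that its vertices are pairwise distinct. The only non-trivial equivalence among based $1$-cubes with base $A$ is $(A, \{s\}) \sim (sA, \{s^{-1}\})$, which has base $sA \neq A$; so distinct $s,t \in S$ yield distinct edges at $A$ and hence distinct directions. The simplex then embeds because it is the link of $A$ inside the cube $(A, S)$, which is itself embedded in $X_G \supset X_\E$ by Lemma~\ref{newlemmw-}.

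For the intersection property, take $S_1, S_2 \in \sal_A$ and set $S_0 = S_1 \cap S_2$. As a subset of the cubic set $S_1$, the set $S_0$ is cubic, and every $[T]A$ with $T \subset S_0$ is already a vertex of $(A, S_1)\subset X_\E$, hence lies in $\E$; so either $S_0 = \emptyset$ or $S_0 \in \sal_A$, and the candidate common face $\sigma_{S_0}$ (empty in the first case) sits visibly inside both $\sigma_{S_i}$. The reverse inclusion is the heart of the argument: any $p \in \sigma_{S_1} \cap \sigma_{S_2}$ lies on a minimal face $\sigma_{T_i} \subset \sigma_{S_i}$ for $i = 1, 2$, and these two faces are the same simplex of $LK(A)$. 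Using that a cube of $X_\E$ containing $A$ has a unique based-cube representation with base $A$, this forces $T_1 = T_2 \subset S_0$, placing $p$ in $\sigma_{S_0}$.

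The main obstacle is precisely this last identification step: controlling how sub-simplices of distinct maximal simplices of $LK(A)$ can be identified. The point is to inspect the equivalence relation defining $X_G$ restricted to the faces of $(A, S_1)$ and $(A, S_2)$ containing $A$: the potentially non-trivial identifications $(A, S) \sim ([T] A, S_T)$ all move the base vertex, so once the base vertex is fixed at $A$, they collapse to the trivial identifications and the matching of the sub-simplices reduces to the set-theoretic equality $T_1 = T_2$.
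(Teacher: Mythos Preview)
Your argument is correct and follows the same underlying idea as the paper: the simplices of $LK(A)$ are in bijection with the non-empty cubic sets $S$ satisfying $[T]A\in\E$ for all $T\subset S$, via the uniqueness of the based-cube representation at $A$; once this bijection is set up, the simplicial-complex axioms reduce to the fact that such sets are closed under taking subsets and that a simplex is determined by its vertex set. The paper's proof simply asserts this description of the link (after describing a neighborhood of $A$ via triples $(A,S,x)$ with $x(S)\subset[0,1/2)$), while you carry out the verification of embeddedness and the intersection property explicitly; the approaches are essentially identical in content.
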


\begin{proof} A neighborhood of $A\in G$ in $X_G$ can be obtained by taking all triples $(A,S,x)$, where $S$ is a  cubic set of glides and $x(S)\subset [0,1/2)$, and  identifying two such triples $(A,S_1,x_1)$, $ (A,S_2,x_2)$ whenever $  x_1=x_2$ on $S_1\cap S_2$ and $
x_1(S_1\setminus S_2)=x_2(S_2\setminus S_1)=0$. Therefore, the link $LK_G(A)$ of $A $ in $X_G$ is the simplicial complex whose vertices are    the glides    and whose simplices are  the cubic sets of  glides. The link $LK_\E(A)$ of $A \in \E$ in $X_\E$ is the subcomplex of $LK_G(A)$ formed by the glides $s\in G$ such that $sA\in \E$ and the cubic sets $S\subset G$ such that $[T]A\in \E$ for all $T\subset S$.
\end{proof}

For $A\in \E $, we now reformulate the flag condition on
the link $LK_\E(A)$ of $A $ in $X_\E$ in terms of glides. Observe that finite  sets of pairwise
 adjacent vertices in $LK_\E(A)$ bijectively correspond to
   pre-cubic sets of glides $S \subset G$  such that

$(\ast)$  $sA\in \E$ for all $s\in S$  and $s{t} A\in \E$ for all
distinct $s, {t}\in S$.

\begin{lemma}\label{simple+}  The link $LK_\E(A)$ of $A \in \E$ in $X_\E$ is a flag complex if and only if any pre-cubic set  of glides $S \subset G$  satisfying $(\ast)$ is cubic and   $[S]A\in \E$.
   \end{lemma}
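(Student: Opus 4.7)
The plan is to unpack the flag condition on $LK_\E(A)$ using the explicit description of this link obtained in the proof of Lemma~\ref{simple}. Recall from that proof that the vertices of $LK_\E(A)$ are the glides $s\in G$ with $sA\in\E$, and the simplices are precisely the cubic sets $S\subset G$ such that $[T]A\in\E$ for every $T\subset S$. Combined with the observation preceding the lemma, finite sets of pairwise adjacent vertices of $LK_\E(A)$ correspond bijectively to pre-cubic sets of glides $S\subset G$ satisfying $(\ast)$. Thus saying that $LK_\E(A)$ is flag is equivalent to saying that every pre-cubic $S\subset G$ satisfying $(\ast)$ is cubic with $[T]A\in \E$ for all $T\subset S$. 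The content of the lemma is that in this equivalent condition one may replace ``$[T]A\in\E$ for all $T\subset S$'' by the single requirement ``$[S]A\in\E$''.

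For the forward direction, I would assume $LK_\E(A)$ is flag and take any pre-cubic $S$ satisfying $(\ast)$. The vertices of $S$ are pairwise adjacent in $LK_\E(A)$ by the correspondence above, so by the flag property they span a simplex. Hence $S$ is cubic and $[T]A\in\E$ for every $T\subset S$; taking $T=S$ gives $[S]A\in\E$.

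For the converse, assume that every pre-cubic $S$ satisfying $(\ast)$ is cubic with $[S]A\in\E$, and let $S$ be a finite set of pairwise adjacent vertices in $LK_\E(A)$, i.e.\ a pre-cubic set satisfying $(\ast)$. I need to show that $S$ is a simplex, which by the description of $LK_\E(A)$ amounts to showing that $S$ is cubic and $[T]A\in\E$ for all $T\subset S$. The hypothesis applied directly to $S$ gives cubicness. For the pointwise membership conclusion, the key observation is that any $T\subset S$ is itself pre-cubic (subsets of pre-cubic sets are pre-cubic by definition) and satisfies $(\ast)$: the conditions ``$sA\in\E$ for $s\in T$'' and ``$stA\in\E$ for distinct $s,t\in T$'' are inherited from the corresponding conditions for $S$. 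Thus the hypothesis applied to $T$ yields $[T]A\in\E$, completing the argument.

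The argument is essentially a bookkeeping exercise translating the flag condition through the description of $LK_\E(A)$ given in Lemma~\ref{simple}. The only substantive point is the hereditary nature of pre-cubicness and of $(\ast)$ under taking subsets, which permits the upgrade from $[S]A\in\E$ to $[T]A\in\E$ for every $T\subset S$ by reapplying the hypothesis. No serious obstacle is anticipated.
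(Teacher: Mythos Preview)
Your proof is correct and follows exactly the route the paper indicates: the paper simply remarks that the lemma follows directly from the definitions together with the obvious fact that if a pre-cubic set of glides satisfies $(\ast)$ then so do all its subsets, and you have spelled out precisely this argument.
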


This lemma follows directly from the definitions. One should use the
obvious fact  that if a pre-cubic set of glides satisfies
$(\ast)$ then so do all its subsets.

 We now formulate combinatorial conditions on  a set $\E\subset G$  necessary and sufficient for
 $X_\E$ to be nonpositively curved  in the sense of Section~\ref{Cubed complexes}. We say that~$\E$ is  {\it regular}
if    for every $A\in \E$, all     pre-cubic sets
 of glides  $S \subset G$  satisfying $(\ast)$ are cubic. We say that~$\E$ satisfies the  {\it   cube condition} if for any
 $A\in \E$ and  any  pairwise
independent glides $s_1, s_2, s_3\in G$ such that  $  s_1A, s_2 A, s_3
A, s_1 s_2 A, s_1 s_3 A, s_2 s_3A\in \E$, we necessarily have  $s_1 s_2 s_3 A\in
\E$.
This condition  may be reformulated by saying that if seven vertices of a 3-cube in~$G$ belong to~$\E$, then so does the  eighth vertex.




 \begin{theor}\label{simpleNEW}  The  cubed complex $X_\E$ of a  set $\E\subset G$  is nonpositively curved if and only if  $\E$   is regular and satisfies the  cube condition.
\end{theor}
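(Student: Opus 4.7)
The plan is to use Lemmas~\ref{simple} and~\ref{simple+} to reduce the theorem to a combinatorial assertion about pre-cubic sets of glides, and then to handle the nontrivial direction by induction on cardinality, with the cube condition supplying the inductive step. By definition, $X_\E$ is nonpositively curved iff it is simple and all vertex links are flag. Lemma~\ref{simple} gives simplicity for free, while Lemma~\ref{simple+} reformulates flagness at $A\in\E$ as the requirement that every pre-cubic $S\subset G$ satisfying $(\ast)$ be cubic \emph{and} satisfy $[S]A\in\E$. The theorem thus amounts to the equivalence of this combined requirement (for all $A\in\E$) with the combination of regularity and the cube condition.

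The forward implication is direct. The ``cubic'' clause of Lemma~\ref{simple+} is exactly the definition of regularity. For the cube condition, given $A\in\E$ and pairwise independent $s_1,s_2,s_3\in G$ with the six elements $s_iA$ and $s_is_jA$ in $\E$, the three-element set $S=\{s_1,s_2,s_3\}$ is pre-cubic and satisfies $(\ast)$, so Lemma~\ref{simple+} yields $s_1s_2s_3A=[S]A\in\E$.

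For the reverse implication, fix $A\in\E$ and a pre-cubic $S\subset G$ satisfying $(\ast)$. Regularity forces $S$, and hence every subset $T\subset S$, to be cubic, so only the statement $[T]A\in\E$ remains; I prove this by induction on $k=|T|$. The cases $k\leq 2$ are immediate from $(\ast)$, and $k=3$ is the cube condition applied at $A$. For $k\geq 4$, enumerate $T=\{t_1,\dots,t_k\}$, set $T'=\{t_4,\dots,t_k\}$, and let $B=[T']A$. Every subset of a pre-cubic set is again pre-cubic and inherits $(\ast)$, so the inductive hypothesis, applied to $T'$ and to $T'\cup\{t_i\}$, $T'\cup\{t_i,t_j\}$ for $1\leq i<j\leq 3$, gives $B\in\E$, $t_iB=[T'\cup\{t_i\}]A\in\E$ and $t_it_jB=[T'\cup\{t_i,t_j\}]A\in\E$. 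The glides $t_1,t_2,t_3$ are pairwise independent as members of the pre-cubic set $T$, so the cube condition applied at $B$ to $\{t_1,t_2,t_3\}$ yields $t_1t_2t_3B=[T]A\in\E$, the last equality using the commutativity of the pairwise independent glides $t_1,\dots,t_k$.

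The only real delicacy is the inductive step for $k\geq 4$: one must notice that the cube condition, although phrased at a single point for three glides, can be transported to the auxiliary basepoint $B=[T']A$ and that exactly the data supplied by the inductive hypothesis (values of $[\,\cdot\,]A$ on enlargements of $T'$ by one or two elements of $\{t_1,t_2,t_3\}$) match what the cube condition consumes. Once this bookkeeping is in place, everything else follows mechanically from Lemma~\ref{simple+} and the stability of pre-cubicity and of $(\ast)$ under taking subsets.
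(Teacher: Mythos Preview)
Your proof is correct and follows essentially the same approach as the paper: reduce via Lemmas~\ref{simple} and~\ref{simple+}, read off regularity and the cube condition in the forward direction, and for the converse induct on the cardinality of the pre-cubic set, using the cube condition at the shifted basepoint $[T']A$ to pass from smaller subsets to $[S]A$. The only difference is cosmetic---you separate the case $k=3$ from $k\geq 4$ whereas the paper handles $k\geq 3$ uniformly---but the logical content is identical.
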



 \begin{proof}  Lemmas \ref{simple}  and \ref{simple+} imply that  $X_\E$ is
nonpositively curved if and only if for all $A\in \E$,
 each       pre-cubic set   of
glides $S\subset G$    satisfying $(\ast)$ is cubic and    $[S]A\in \E$. We
must only show that the inclusion $[S]A\in \E$ can be replaced with
the   cube condition.
 One direction is obvious: if $A, s_1, s_2, s_3\in G$ satisfy the assumptions of  the cube condition, then
the set  $S=\{s_1,s_2, s_3\}$  satisfies $(\ast)$ and so
$s_1s_2s_3 A=[S]A\in
  \E$.
 Conversely, suppose that $\E$ is regular and meets the cube condition.  We must  show
that $[S]A\in \E$ for any $A\in \E$ and any pre-cubic set
 of  glides $S \subset G$  satisfying $(\ast)$.   We proceed by induction on
$k=\card(S)$. For $k=0$, the claim follows from the inclusion $A\in
\E$. For $k=1,2$, the claim follows from $(\ast)$. If $k\geq 3$,
then the induction assumption guarantees that $[T]A\in \E$ for any
proper subset $T\subset S$. If $S=\{s_1,...,s_k\}$, then applying
the cube condition   to $s_1,s_2,s_3$ and the element $s_4 \cdots
s_{k}A $ of $\E$, we obtain that $[S]A\in \E$.
 \end{proof}

By Section \ref{Cubed complexes}, if $X_\E$ is nonpositively curved,
then the universal covering   of every finite-dimensional component
of $X_\E$ is a CAT(0)-space. The component itself is then an
Eilenberg-MacLane space of type $K(\pi,1)$  where $\pi$ is the
corresponding glide group. Note that  $\dim X_\E$ is the maximal dimension of a cube in~$\E$. In particular, if   $\E$ is a finite set, then
$ X_\E  $ is a finite-dimensional complex.

If the  gliding system in~$G$ is  regular, then  all
subsets of~$G$ are regular.  As a consequence, we obtain  the following corollary.

  \begin{corol}\label{simpleNEWBIS}  For a  group $G$ with a regular gliding system,
the glide complex $X_\E$ of    a  set $\E\subset G$  is nonpositively curved if and only if $\E$ satisfies the cube condition. In particular, the glide complex $X_G$ of $\E=G$  is nonpositively curved.
\end{corol}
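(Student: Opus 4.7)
The plan is to deduce Corollary \ref{simpleNEWBIS} directly from Theorem \ref{simpleNEW} by observing that the regularity hypothesis on the gliding system forces every subset $\E\subset G$ to be regular in the sense of Section~\ref{ From glides to cubed   complexes}. Once this is established, the cube condition is the only remaining criterion, and the case $\E=G$ becomes trivial because $G$ is closed under multiplication.

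In more detail, I would first unpack the two notions of regularity. The gliding system $(\G,\I)$ is regular if every pre-cubic set of glides is cubic, whereas $\E\subset G$ is regular if, for every $A\in \E$, every pre-cubic set $S\subset G$ satisfying the condition $(\ast)$ of Section~\ref{ From glides to cubed   complexes} is cubic. The second notion places a restriction only on those pre-cubic sets $S$ that meet condition $(\ast)$ relative to $A$. Therefore, if the gliding system is regular, then \emph{all} pre-cubic sets are cubic, and in particular the ones satisfying $(\ast)$ are cubic. Hence every $\E\subset G$ is regular for free. This is the main (and essentially only) step.

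Applying Theorem \ref{simpleNEW}, the nonpositive curvature of $X_\E$ is then equivalent to the single remaining hypothesis: that $\E$ satisfies the cube condition. This yields the first assertion of the corollary. For the second assertion, take $\E=G$ and verify the cube condition directly: given any $A\in G$ and pairwise independent glides $s_1,s_2,s_3\in \G$, the product $s_1 s_2 s_3 A$ automatically belongs to $G$, so the hypothesis of the cube condition is trivially satisfied. Combined with the first assertion, this gives that $X_G$ is nonpositively curved.

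I do not expect any real obstacle here: the statement is essentially a reformulation of Theorem \ref{simpleNEW} once the implication ``regular gliding system $\Longrightarrow$ every $\E$ is regular'' is noted. The only place where one has to be mildly careful is to check that no hidden finiteness or nonemptiness condition on $\E$ is required; inspection of Section~\ref{ From glides to cubed   complexes} confirms that Theorem \ref{simpleNEW} applies to arbitrary subsets of $G$, including $\E=G$ itself.
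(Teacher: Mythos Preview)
Your proposal is correct and follows the same approach as the paper: the paper simply notes that if the gliding system is regular then every subset $\E\subset G$ is regular, and then invokes Theorem~\ref{simpleNEW}. Your additional remark that the cube condition holds trivially for $\E=G$ (since $s_1s_2s_3A\in G$ automatically) makes explicit what the paper leaves implicit in the phrase ``In particular''.
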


Corollary~\ref{simpleNEWBIS}   applies, in particular,  to the regular gliding systems described in Examples~\ref{exam}.2--6.

 \subsection{Remarks}\label{examsec2}  We make a few miscellaneous remarks on the glide complexes.

 1. The group~$G$ acts on $X_G$ on the right   by   $(A,S,x)g=(Ag,S,x)$ for   $g\in G$. This action preserves the cubed structure   and is free and transitive on
  $X^{(0)}_G=G$.


 2.   In Example~\ref{exam}.2,  $X_G $ is the
  graph with the set of vertices $G$, two vertices $A, B\in G$ being connected by a (single) edge if and only if $AB^{-1}\in \G$.   All subsets of $G$ satisfy the
 cube condition because $G$ has no independent glides. The corresponding glide groups are free.


 3.  In Example~\ref{exam}.3, if  the rank, $n$, of $G$ is finite, then  $X_G=\RR^n$ with the standard action of~$\ZZ^n$.


4.  In Example~\ref{exam}.4, $X_G$ is simply-connected,   the   action of $G$ on $X_G$
 is free, and  the projection $ X_G\to X_G/G$ is the universal covering
of
 $X_{G}/G$. Composing the cells  of $ X_G$ with this projection we turn $X_G/G$ into a cubed complex  called the
  Salvetti complex, see \cite{Ch}.  This complex   has only one
0-cell; its link    is isomorphic to the link of any vertex
of $X_G$ and  is a flag complex. This recovers the well known fact
that the Salvetti complex is nonpositively curved. Clearly, $\dim X_{G}
=\dim (X_{G}/G)$ is the maximal
  number of  vertices of a  complete subgraph   of the graph~${\Gamma}$.

5.  In Examples~\ref{exam}.5 and \ref{exam}.6,   if    $E$ is finite, then $X_G$ is finite dimensional.



 \section{Homomorphisms of glide groups}\label{incmaps}

 We discuss two families of homomorphisms of glide groups: the inclusion homomorphisms  and the typing homomorphisms.

   \subsection{Inclusion homomorphisms}\label{incl++} Let $G$ be a group   with a
   gliding system. Consider any sets $\F\subset \E\subset G$ and the associated cubed complexes
   $ X_{\F} \subset  X_{\E} \subset  X_{G}$.      We say that  $\F  $
   satisfies the {\it square condition   $\rel \E$}, if for any $A\in \F$ and
   any independent glides $s,t\in G$ such that $sA, tA\in \F, stA \in
   \E$, we necessarily have $stA\in \F$.  This condition may be reformulated by saying that if three vertices of a square (a 2-cube) in $X_\E$ belong to $X_\F$, then so does the  fourth vertex. For example, the
   intersection of $\E$ with any subgroup of~$G$  satisfies the  square condition   $\rel
   \E$.

 \begin{theor}\label{ecu} Let $\E\subset G$ be a regular set    satisfying  the cube condition  and such that $\dim X_\E<\infty$. Let $\F$ be a subset of $\E$
 satisfying the square condition $\rel \E$. Then  the
 inclusion homomorphism $\pi_1(X_{\F}, A) \to \pi_1(X_\E, A)$ is
 injective for all $A\in \F$.
\end{theor}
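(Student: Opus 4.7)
The plan is to apply Lemma~\ref{Wise} to the inclusion $X_\F \hookrightarrow X_\E$. To do this we must verify three things: that both complexes are nonpositively curved and finite-dimensional, and that the inclusion is a local isometry. The hypothesis $\dim X_\E < \infty$ takes care of finite-dimensionality (since $\dim X_\F \leq \dim X_\E$), and Theorem~\ref{simpleNEW} gives $X_\E$ nonpositively curved. So the real work is to (a)~show $X_\F$ is nonpositively curved via Theorem~\ref{simpleNEW}, and (b)~verify the local isometry condition.

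For (a), regularity of $\F$ comes for free: any pre-cubic set of glides $S\subset G$ satisfying $(\ast)$ at $A\in\F$ relative to $\F$ \emph{a fortiori} satisfies $(\ast)$ relative to $\E$, hence is cubic by regularity of $\E$. The cube condition for $\F$ requires a short argument: given $A\in\F$ and pairwise independent $s_1,s_2,s_3$ with $s_iA,s_is_jA\in\F$, the cube condition on $\E$ yields $s_1s_2s_3A\in\E$. Now view this as the fourth vertex of the square with vertices $A'=s_3A,\ s_1A',\ s_2A',\ s_1s_2A'$; three of these vertices lie in $\F$ and the fourth lies in $\E$, so the square condition $\rel\E$ places $s_1s_2s_3A=s_1s_2A'$ in $\F$.

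For (b), I need to show that for each $A\in\F$, the inclusion $LK_\F(A)\hookrightarrow LK_\E(A)$ of links realises $LK_\F(A)$ as a \emph{full} subcomplex. By Lemma~\ref{simple+} and the characterisation of links given in the proof of Lemma~\ref{simple}, a simplex of $LK_\E(A)$ corresponds to a pre-cubic set $S\subset G$ with $[T]A\in\E$ for all $T\subset S$, and the condition that all its vertices lie in $LK_\F(A)$ means that $sA\in\F$ for every $s\in S$. I must show $[T]A\in\F$ for every $T\subset S$. Induct on $\card(T)$. The cases $\card(T)\leq 1$ are immediate; for $\card(T)=2$, say $T=\{s,t\}$, we have $sA,tA\in\F$ and $stA\in\E$, so the square condition $\rel\E$ gives $stA\in\F$. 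For $\card(T)\geq 3$, write $T=\{t_1,\dots,t_k\}$; by induction $[T']A\in\F$ for every proper $T'\subset T$, so in particular all six elements
\[
t_i\,(t_4\cdots t_k A),\qquad t_it_j\,(t_4\cdots t_k A)\qquad (1\leq i<j\leq 3)
\]
lie in $\F$, and the cube condition for $\F$ established in (a) forces $[T]A=t_1t_2t_3\cdot t_4\cdots t_kA\in\F$.

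With the local isometry in hand, Lemma~\ref{Wise} gives the injectivity of $\pi_1(X_\F,A)\to\pi_1(X_\E,A)$. The main obstacle is really just keeping the inductive argument in step (b) straight, since it mirrors but does not directly invoke the proof of Theorem~\ref{simpleNEW}; the square condition $\rel\E$ enters precisely in the base case $\card(T)=2$, and everything else propagates from there using the cube condition already proved for $\F$.
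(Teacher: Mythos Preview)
Your proof is correct and follows essentially the same route as the paper: establish nonpositive curvature of both $X_\F$ and $X_\E$ via Theorem~\ref{simpleNEW}, check that the inclusion is a local isometry, and invoke Lemma~\ref{Wise}. The only difference is stylistic: in part~(b) the paper shortcuts the fullness verification by observing that since $L'=LK_\F(A)$ is already known to be a flag complex, it suffices to check that any two vertices of $L'$ adjacent in $LK_\E(A)$ are adjacent in $L'$---which is immediate from the square condition $\rel\,\E$---whereas you carry out a direct induction on $\card(T)$ that reproves this in effect. Both arguments are sound.
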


\begin{proof}   Since $\E$ is regular,
   so is $\F\subset \E$. The cube condition on $\E$ and the square condition $\rel \E$ on
   $\F$     imply that $\F$ satisifes the cube
   condition. By Theorem~\ref{simpleNEW},   $X_\E$ and $X_{\F}$ are nonpositively
   curved. By
   assumption, the cubed complex $X_\E$ is finite-dimensional and so
   is its subcomplex $X_{\F}$. We claim that the inclusion $X_{\F}\hookrightarrow
   X_{\E}$ is a local isometry. Together with Lemma~\ref{Wise} this
   will imply the   theorem.

To prove our claim,   pick any $A\in \F$ and consider the simplicial
complexes $L'=LK(A, X_{\F})$, $L=LK(A, X_{\E})$.  The inclusion
$X_{\F}\hookrightarrow
   X_{\E}$ induces an embedding $L'\hookrightarrow L$, and we need only to verify that the image of $L'$ is  a full
subcomplex of~$L$.  Since   $L'$ is a  flag simplicial
complex, it suffices to verify that any  vertices of~$L'$
adjacent in~$L $ are  adjacent in~$L'$.
This follows
   from the square condition  on~$\F$.
 \end{proof}

The set $\E\subset G$ satisfies the {\it square condition} if it satisfies the square
condition $\rel G$. In other words,  $\E $ satisfies the   square condition if for any $A\in \E$ and
   any independent glides $s,t\in G$ with $sA, tA\in \E $, we necessarily have $stA\in \E$.

\begin{corol}\label{simpleNEW++-}  If   the gliding system in~$G$ is regular and $\dim X_G<\infty$, then for every    set $\E\subset G$ satisfying the square
condition  and every   $A \in  \E$,   the
 inclusion homomorphism $\pi_1(X_{\E}, A) \to \pi_1(X_G, A)$ is
 injective.
\end{corol}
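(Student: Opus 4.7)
The plan is to deduce this directly from Theorem~\ref{ecu} by taking the ``ambient'' set in that theorem to be $G$ itself and the ``inner'' set to be $\E$. That is, I would apply Theorem~\ref{ecu} with its $\E$ replaced by $G \subset G$ and its $\F$ replaced by the given set $\E \subset G$.

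To carry this out, I would verify the four hypotheses of Theorem~\ref{ecu} in turn. First, $G$ is regular as a subset of itself: since the gliding system in $G$ is assumed regular, \emph{every} subset of $G$ is regular (as noted right before the statement of Corollary~\ref{simpleNEWBIS}), and in particular so is $G$. Second, $G$ trivially satisfies the cube condition, because that condition only requires certain vertices to lie in $G$, which they do by definition. (Alternatively, by Corollary~\ref{simpleNEWBIS}, $X_G$ is already known to be nonpositively curved.) Third, the finite-dimensionality $\dim X_G < \infty$ is part of our hypothesis. Fourth, the square condition on $\E$ as stated in the corollary is, by the definition just given above Corollary~\ref{simpleNEW++-}, exactly the square condition $\rel G$ on $\E$, which is what Theorem~\ref{ecu} requires of $\F$.

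With these four points checked, Theorem~\ref{ecu} applies and yields the injectivity of the inclusion-induced homomorphism $\pi_1(X_\E, A) \to \pi_1(X_G, A)$ for every $A \in \E$, which is precisely the claim.

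There is really no obstacle here: the corollary is a formal specialization of Theorem~\ref{ecu} obtained by setting $\F = \E$ and taking the ambient set to be $G$. The only thing to be careful about is the matching of terminology, namely that ``square condition'' without qualifier denotes ``square condition $\rel G$,'' so that the hypothesis of the corollary lines up verbatim with the hypothesis on $\F$ in Theorem~\ref{ecu}.
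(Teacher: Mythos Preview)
Your proof is correct and is exactly the specialization of Theorem~\ref{ecu} the paper intends: take the ambient set to be $G$ (which is trivially regular and trivially satisfies the cube condition when the gliding system is regular) and the inner set to be $\E$, using that the unqualified ``square condition'' means ``square condition $\rel G$.'' The paper leaves this corollary without an explicit proof, so your write-up simply spells out the intended one.
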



   \subsection{Typing homomorphisms}\label{prelimArtin++}   A group $G$ carrying  a
gliding system $(\G,\I)$ determines a right-angled Artin group
$\A=\A(G)$ with generators $\{g_s\}_{s \in \G }$ and    relations $g_s
g_t =g_t g_s$ where $(s, t)$ runs over $ \I $.  We now relate $\A$  to the glide groups.

First,  we introduce a notion of an orientation on
a  set $\E
\subset G$. An {\it orientation} on~$\E$ is  a
choice of direction on each 1-cell of the glide complex $X_\E$ such
that    the opposite sides of any  2-cell of $X_\E$ (a square) point
towards each other on the boundary of the square. In other words,  for any
based square  $(A,\{s,t\})$ with $A, sA, tA, stA\in \E$, the 1-cells
connecting $A$ to $sA$ and $tA$ to $stA$ are  either both directed towards
$sA, stA$ or both directed towards $A,tA$ (and similarly with $s$, $t$
exchanged). A set $\E \subset G$ is {\it orientable} if it allows an
orientation and is {\it oriented} if it has a
distinguished orientation. An orientation of $\E$ induces an
orientation of any  subset  $\F\subset  \E$ via the inclusion
$X_{\F}\subset X_\E$. Therefore, all subsets of an orientable set
are orientable.  These definitions   apply, in particular,  to $\E=G$. Examples of oriented sets will be given in Section~\ref{theglidingsystemcycles}.


Consider an oriented set $\E\subset G$. Each 1-cell $e$ of $X_\E$ is oriented and so leads from a vertex $A\in \E$ to a vertex $B\in \E$. We set
$\vert e\vert= BA^{-1}\in G$. It follows from the definition of $X_\E$ that $\vert e\vert$ is a glide in $G$. Consider now a  path $\alpha$
in the 1-skeleton of $X_\E$    formed by $n\geq 0$ consecutive
 1-cells $e_1,...,e_n$. The path $\alpha$ determines an orientation of $e_1,...,e_n$ so that the terminal endpoint of
$e_k$ is the initial endpoint of $e_{k+1}$ for $k=1, ..., n-1$. This
orientation of   $e_k$   may coincide or not with that
given by the orientation of $\E$. We set $\nu_k=+1$ or $\nu_k=-1$,
respectively. Set
\begin{equation}\label{pathee} \mu(\alpha)=  g_{\vert e_1\vert}^{\nu_1} \,  g_{\vert e_2\vert}^{\nu_2} \cdots g_{\vert e_n\vert}^{\nu_n} \in \A=\A(G)  .\end{equation}
 It is clear that   $\mu(\alpha)$ is preserved under  inserting in
the sequence $e_1,...,e_n$ two opposite 1-cells  or four
1-cells forming the boundary of a 2-cell. Therefore   $\mu(\alpha)$ is
preserved under homotopies of $\alpha$ in $X_\E$ relative to the
endpoints. Applying $\mu$ to loops
based at $A\in \E$, we obtain a   homomorphism $\mu_A: \pi_1(X_\E,
A)\to \A $. Following the terminology of \cite{HW}, we call $\mu_A$ the {\it typing homomorphism}.

\begin{theor}\label{simpleNEW++}  If there is  an upper bound
on the number
 of pairwise independent glides in $G$, then for any   oriented regular    set $\E\subset G$  satisfying  the square condition and for any $A\in \E$, the typing homomorphism  $\mu_A: \pi_1(X_\E,
A)\to \A $  is   injective.
\end{theor}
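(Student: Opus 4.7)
The plan is to realize $\mu_A$ as the map on fundamental groups induced by a cubical local isometry $f:X_\E\to Y$, where $Y$ is the Salvetti complex of $\A$, and then invoke Lemma~\ref{Wise}. I work with the (implicit) convention $g_{s^{-1}}=g_s^{-1}$, so that the defining graph of $\A$ may be taken to have vertex set $\G/(s\sim s^{-1})$.

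First I would equip $\A$ with the gliding system of Example~\ref{exam}.4. The hypothesis on the maximal number of pairwise independent glides in $G$ yields the same bound in $\A$, forcing $X_\A$ (and hence $X_\E$) to be finite-dimensional. The gliding system on $\A$ is regular because the abelianization of $\A$ is free abelian on the generators, so Corollary~\ref{simpleNEWBIS} gives that $X_\A$ is nonpositively curved. The right $\A$-action on $X_\A$ is free and cubical, so the quotient $Y:=X_\A/\A$ is a nonpositively curved cubed complex with a single $0$-cell $*$. I then define a cubical map $f:X_\E\to Y$ by sending every $0$-cell to $*$ and sending the $k$-cube in $X_\E$ represented by a based cube $(B,S)$ to the image in $Y$ of the cube $(1,\{g_s\}_{s\in S})$ in $X_\A$. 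The identifications $g_{s^{-1}}=g_s^{-1}$ together with the commutation relations $g_s g_t = g_t g_s$ for $(s,t)\in\I$ make $f$ well-defined on equivalence classes of based cubes, and tracking orientations along an edge loop at $A$ shows that the product in $\A$ read off by $f_*$ coincides with \eqref{pathee}, giving $f_* = \mu_A$.

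The heart of the proof is the claim that $f$ is a local isometry. Fix $B\in X_\E^{(0)}$ and set $L:=LK(B,X_\E)$ and $L':=LK(*,Y)$. Vertices of $L$ are the glides $s\in\G$ with $sB\in\E$, with a simplex spanned by a cubic set $S$ provided $[T]B\in\E$ for all $T\subset S$; vertices of $L'$ are the glides $\{g_s^{\pm 1}\}_{s\in\G}$ of $\A$, with simplices the pre-cubic sets (all cubic, by regularity). The orientation of $\E$ assigns a sign $\varepsilon_s\in\{\pm 1\}$ to each vertex $s$ of $L$, and $f_B$ sends $s$ to $g_s^{\varepsilon_s}$. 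Injectivity of $f_B$ is immediate, since distinct glides of $G$ yield distinct generators of $\A$. For fullness, suppose $\sigma=\{g_s^{\varepsilon_s}\}_{s\in S'}$ spans a simplex of $L'$ with each vertex in $\Ima f_B$; then $S'\subset \G$ is pre-cubic with $sB\in\E$ for all $s\in S'$. The square condition on $\E$ applied to each pair $s,t\in S'$ gives $stB\in\E$, so $S'$ satisfies condition $(\ast)$ of Section~\ref{cubecomplexX+}. Regularity of $\E$ makes $S'$ cubic, and an easy induction on $|T|$ using the square condition (building up higher-dimensional closure from the two-dimensional case) yields $[T]B\in\E$ for every $T\subset S'$. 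Hence $S'$ is a simplex of $L$ whose $f_B$-image equals $\sigma$, so $f_B$ is an embedding onto a full subcomplex. By Lemma~\ref{Wise}, the homomorphism $\mu_A = f_*$ is injective. The hard part is the compatibility check for $f$ on equivalence classes of based cubes; once $f$ is in place, the local isometry property flows cleanly from the square condition on $\E$ and the regularity of both $\E$ and the gliding system on $\A$.
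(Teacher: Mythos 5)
Your proposal follows the same overall strategy as the paper: equip $\A$ with the Example~\ref{exam}.4 gliding system, build the Salvetti complex $Y$, construct a cubical map $f:X_\E\to Y$ inducing the typing homomorphism, show $f$ is a local isometry, and invoke Lemma~\ref{Wise}. Your inductive argument that the square condition promotes pairwise inclusions $stB\in\E$ to $[T]B\in\E$ for all $T\subset S'$ is a valid (and perhaps cleaner) alternative to invoking flagness of the link, and the fullness argument is sound. However, there is a real gap at the very start, and it propagates.

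The convention $g_{s^{-1}}=g_s^{-1}$, i.e.\ taking the defining graph of $\A$ to have vertex set $\G/(s\sim s^{-1})$, is a genuine change of target group that the paper does not make: in the paper's $\A$, the elements $g_s$ and $g_{s^{-1}}$ are distinct \emph{generators}, not inverse to each other. Your convention causes two distinct problems. First, when $s=s^{-1}$ in $G$ --- which is the situation for \emph{every} glide in the even-cycle gliding systems of Sections~\ref{theglidingsystemcycles} and~\ref{cychyper}, since $2^E$ has exponent two --- the relation $g_{s^{-1}}=g_s^{-1}$ forces $g_s^2=1$, so your $\A'$ is a graph product of $\ZZ/2\ZZ$'s rather than a right-angled Artin group. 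Example~\ref{exam}.4, the Salvetti construction, and the nonpositive-curvature statement you rely on are all RAAG facts; none of them is available for $\A'$ as you have built it, so $Y$ fails to be the finite-dimensional nonpositively curved complex your appeal to Lemma~\ref{Wise} requires. Second, even when no glide has order two, your claim that \lq\lq injectivity of $f_B$ is immediate, since distinct glides of $G$ yield distinct generators of $\A$'' is false under the convention: the distinct glides $s$ and $s^{-1}$ now yield $g_s$ and $g_s^{-1}$, which are not distinct generators. If both $sA\in\E$ and $s^{-1}A\in\E$ and the orientation happens to assign opposite signs $\varepsilon_s=-\varepsilon_{s^{-1}}$ (nothing in the definition of an orientation links these two independently chosen edge directions), then $f_B(v_s)=g_s^{\varepsilon_s}=g_s^{-\varepsilon_{s^{-1}}}=f_B(v_{s^{-1}})$, so $f_B$ is not an embedding and $f$ is not a local isometry. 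The paper sidesteps both problems by working with all of $\{g_s\}_{s\in\G}$: the link $K$ of $*$ has two vertices $w_s^{\pm}$ for \emph{each} $s\in\G$, the map sends $v_s\mapsto w_{|s|}^{\varepsilon_s}$ where $|s|=s^{\varepsilon_s}$, and injectivity follows because $s$ is recoverable from the pair $(|s|,\varepsilon_s)$. You should drop the identification $g_{s^{-1}}=g_s^{-1}$ and define $f$ directly via the orientation, assigning to the edge from $B$ to $sB$ the $1$-cell of $Y$ labeled $g_{|e|}$ with $|e|$ the glide read off the oriented edge; then the rest of your argument goes through as written.
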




\begin{proof} 
By Example \ref{exam}.4, the group $\A $  carries a
gliding system with glides    $\{g_s^{\pm 1}\}_{s\in \G}$. Two glides
$g_s^{\pm 1}, g_t^{\pm 1} \in \A$  are independent if
and only if $s\neq t$ and $(s,t)\in \I$.  Consider the
associated cubed complex $X=X_{\A}$     and the Salvetti complex $Y=X /\A$.  Recall that $\A=\pi_1(Y, \ast)$ where $\ast$ is the unique 0-cell of $Y$.     By Section~\ref{examsec2}, both
$X $ and $Y$ are nonpositively curved. The assumptions of the
theorem imply that the cubed complex $  X_\E $ is nonpositively
curved, and the spaces $X $, $Y$, $ X_\E $   are finite-dimensional. We claim
that the homomorphism $   \mu_A: \pi_1(X_\E, A)\to
\A $ is induced by a local isometry $X_\E\to Y$.
By Lemma~\ref{Wise},  this will imply the theorem.

Since the gliding system in $\A$ is regular, the points of $X $ are represented by  triples $(A\in
\A, \sal , x\in I^\sal)$ where~$\sal$ is a pre-cubic set of glides in~$\A$  that is     $S=\{g_s^{\varepsilon_s}\}_s$ where~$s$ runs over a finite set of   independent glides in~$G$ and
$\varepsilon_s\in \{\pm 1\}$. The space~$X $ is obtained by   factorizing  the set of
  such triples
  by the equivalence relation defined in Section~\ref{cubecomplexX}. The space~$Y$ is obtained from~$X $ by forgetting the first term,~$A$, of the triple.
A point of $Y $ is represented by a  pair   (a pre-cubic set of glides $\sal\subset \A$, $x\in I^\sal$). The space $Y $  is obtained by   factorizing  the set of
  such pairs by the equivalence relation generated by the
following relation: $( \sal,x) \sim ( \sal',x')$ when $
\sal\subset \sal', x=x'\vert_{\sal}, x'(\sal'\setminus \sal)=0$ or
there is   $T\subset \sal$ such that $  \sal'=\sal_T $,
$x'=x$ on $\sal\setminus T$, and $x'(t^{-1})=1-x(t)$ for all $ {t\in
T}$.

We now construct a cubical map $f:X_\E\to Y$ carrying
each 1-cell~$e$     of  $X_\E$   onto the 1-cell of $Y$ determined by
$g_{\vert e\vert} $ where $\vert e\vert \in G$ is the glide determined by the distinguished orientation of $e$. Here is a precise definition of $f$. A point $a\in X_\E$ is
represented by a triple $(A\in \E, S ,x\in I^S)$ where~$S $ is a cubic set of glides in~$G$ such that $[T]A\in \E$ for
all $T\subset S$. For  $s\in S$, set $\vert s \vert =\vert e_s
\vert$ where~$e_s$ is the 1-cell of~$X_\E$ connecting $A$ and $sA$.
By definition, $\vert s \vert =s^{\varepsilon_s}$ where
$\varepsilon_s=+1$ if $e_s$  is oriented towards $ sA$ and
$\varepsilon_s=-1$ otherwise. Let $f(a)\in Y$ be
represented by the pair  $(  \sal=\{g_{\vert s
\vert}^{\varepsilon_s}\}_{s\in S}, y\in I^\sal)$ where $y(g_{\vert
s \vert}^{\varepsilon_s})=x(s)$ for all $s\in S$. This yields a well-defined cubical map $f:X_\E\to Y$ inducing
$\mu_A$ in $\pi_1$.

It remains to show that $f$ is a local isometry.  The link  $L =LK_\E(A)$  of   $A \in \E$ in~$X_\E$  has
a vertex $v_s$ for every glide $s\in G$ such that $sA\in \E$. A set of
vertices $\{v_{s}\}_s$ spans a simplex in $L $
 whenever   $s$ runs over a cubic set of glides in~$G$ (cf.  the proof of Lemma~\ref{simple}; here we use the square condition on~$\E$).
The link, $K$,  of
$\ast  $ in $Y$  has
two vertices $w^+_s$ and $w^-_s$ for every glide $s \in G $. A set of
vertices $\{w^{\pm}_{s}\}_s$ spans a simplex in $K$
 whenever   $s$ runs over a pre-cubic set of glides.   The map $f_A: L \to K$ induced by $f$ carries  $v_s$ to $w_{\vert s
\vert}^{\varepsilon_s}$. This map   is an embedding since we can recover~$s $ from
${\vert s
\vert}$ and ${\varepsilon_s}$. The square condition  on~$\E$ implies that if the images of two  vertices of~$L $
under $f_A$ are adjacent in~$K $, then the vertices themselves are  adjacent in~$L $.
Since~$L $ is a  flag simplicial
complex,   $f_A(L )$ is a full subcomplex of~$K$.
\end{proof}

\begin{corol}\label{simpleNEW++-nilp}  If the set of glides in $G$ is finite and an orientable   regular set $\E\subset G$  satisfies  the square condition,
 then for all $A\in \E$,  the glide group   $\pi_1(X_\E, A)$ is  biorderable, residually nilpotent, residually
finite, and
 embeds in $SL_n(\ZZ)$ for some~$n$.
\end{corol}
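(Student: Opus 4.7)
The plan is to deduce the corollary from Theorem~\ref{simpleNEW++} by reducing each claimed property of $\pi_1(X_\E,A)$ to the corresponding (known) property of a finitely generated right-angled Artin group, using that all four properties in question are inherited by subgroups.

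First I would verify that the hypotheses of Theorem~\ref{simpleNEW++} are satisfied. Finiteness of the set of glides $\G\subset G$ trivially provides an upper bound on the cardinality of any set of pairwise independent glides. The set $\E$ is assumed orientable, so I choose a distinguished orientation on~$\E$; it is also regular and satisfies the square condition. Theorem~\ref{simpleNEW++} then supplies, for each $A\in\E$, an injective typing homomorphism
\[
\mu_A:\pi_1(X_\E,A)\hookrightarrow \A=\A(G),
\]
where $\A(G)$ is the right-angled Artin group on the generating set $\{g_s\}_{s\in\G}$ with commuting relations indexed by $\I$. Because $\G$ is finite, $\A(G)$ is a \emph{finitely generated} right-angled Artin group.

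The second step is to invoke standard structural results for finitely generated right-angled Artin groups. Such groups are biorderable (Duchamp--Krob), residually nilpotent (in fact their lower central series quotients are torsion-free, by Duchamp--Krob), residually finite, and admit a faithful representation into $SL_n(\ZZ)$ for some~$n$ (Humphries; see also Hsu--Wise and Davis--Januszkiewicz). Each of these four properties is inherited by arbitrary subgroups: biorderability passes to subgroups by restricting the order, residual nilpotence and residual finiteness pass to subgroups because the intersections of the relevant normal subgroups with any subgroup are trivial, and an embedding into $SL_n(\ZZ)$ restricts to an embedding of any subgroup. Applying these facts to the image $\mu_A(\pi_1(X_\E,A))\subset\A$ and transferring along the injection~$\mu_A$ yields the four conclusions for $\pi_1(X_\E,A)$.

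There is no real obstacle here beyond bookkeeping; the substantive content has already been done in Theorem~\ref{simpleNEW++}. The only point worth being careful about is the logical order: one first checks that the hypotheses of Theorem~\ref{simpleNEW++} hold (using finiteness of $\G$ to secure the bound on independent glides and orientability of $\E$ to obtain an orientation), then deduces injectivity of $\mu_A$, and only then quotes the four properties of finitely generated right-angled Artin groups together with the fact that they descend to subgroups. I would close the proof by remarking that the integer~$n$ in the embedding $\pi_1(X_\E,A)\hookrightarrow SL_n(\ZZ)$ can be taken to depend only on the cardinality of~$\G$, since it is determined by the ambient Artin group $\A(G)$.
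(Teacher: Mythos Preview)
Your proposal is correct and follows essentially the same argument as the paper: apply Theorem~\ref{simpleNEW++} (using finiteness of~$\G$ to bound the number of pairwise independent glides) to embed $\pi_1(X_\E,A)$ in the finitely generated right-angled Artin group~$\A(G)$, then invoke the known biorderability, residual nilpotence, residual finiteness, and linearity of such groups together with the fact that these properties pass to subgroups. The paper's proof is just a terser version of what you wrote.
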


\begin{proof}
 Finitely generated right-angled Artin groups  have  all  the properties listed in   this corollary, see \cite{DT}, \cite{DJ}, \cite{CW},  \cite{HsW}. These properties are
hereditary and  so  are shared by all subgroups of finitely
generated right-angled Artin groups.
Combining with Theorem~\ref{simpleNEW++} we obtain the  desired result.
\end{proof}

\begin{corol}\label{simpleNEW++-eee}  If  the gliding system in $G$ is regular, the number
		 of pairwise independent glides in $G$ is bounded from above,  and $G$ is oriented in the sense of Section~\ref{prelimArtin++}, then        $\mu_A:\pi_1(X_G, A)\to \A $ is an injection for all $A\in G$.
\end{corol}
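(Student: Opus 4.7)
The plan is to deduce this corollary directly from Theorem~\ref{simpleNEW++} applied to $\E=G$. So the work consists entirely of verifying the four hypotheses of that theorem in this special case.

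First, I would observe that the assumption on the upper bound for the number of pairwise independent glides is exactly the first hypothesis of Theorem~\ref{simpleNEW++}. Next, since the gliding system in $G$ is regular, every subset of $G$ is regular in the sense of Section~\ref{ From glides to cubed   complexes}; in particular, $\E=G$ is regular. Third, $\E=G$ is oriented by assumption. The only hypothesis that might seem to require thought is the square condition, but this is trivial for $\E=G$: for any $A\in G$ and any independent glides $s,t$, the product $stA$ automatically lies in $G$, so the square condition $\rel G$ holds vacuously.

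With all four hypotheses verified, Theorem~\ref{simpleNEW++} applied with $\E=G$ yields that the typing homomorphism $\mu_A:\pi_1(X_G,A)\to \A$ is injective for every $A\in G$, which is the claim.

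I do not anticipate any real obstacle here: the corollary is essentially a ``boundary case'' of Theorem~\ref{simpleNEW++}, where the square condition becomes automatic. The only point worth emphasizing in the write-up is that regularity of the gliding system is what makes $\E=G$ (and indeed all $\E\subset G$) regular as a set, which is the hypothesis actually required by Theorem~\ref{simpleNEW++}.
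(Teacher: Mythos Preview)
Your proposal is correct and matches the paper's own proof, which simply states that the corollary follows from Theorem~\ref{simpleNEW++} because the square condition on $G$ is void. Your slightly more detailed verification of all four hypotheses is entirely in the same spirit.
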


\begin{proof} This follows from Theorem \ref{simpleNEW++} because  the square condition on $G$ is void. \end{proof}

  \section{The dimer complex and the dimer group}\label{dimergroups}

\subsection{Cycles in graphs}\label{Cycles in graphs}   By a {\it graph} we   mean a non-empty 1-dimensional
CW-complex  without isolated 0-cells (i.e., 0-cells not incident to any 1-cells) and without loops (i.e., 1-cells with equal endpoints).   The 0-cells and     1-cells of a graph are called
 {\it vertices} and  {\it edges}, respectively. We allow multiple edges with the same endpoints.
 A {\it subgraph} of a graph~$\Gamma$ is a graph~$\Gamma'$ embedded in~$\Gamma$ such that all vertices/edges of~$\Gamma'$ are  also vertices/edges of~$\Gamma$.

 Given a set~$s$ of edges  of a graph  $\Gamma $, we denote by $\partial s$ the set of vertices of~$\Gamma$ adjacent to at least one edge in~$s$. Such vertices of~$\Gamma$ are called   \emph{vertices of~$s$}. The set~$s$  is {\it cyclic}
  if it  is   finite and   each vertex of~$s$      is incident to precisely two  edges in~$s$.   The  vertices of   a cyclic set~$s$ together with   the  edges in  $s$   form a subgraph of~$\Gamma$ denoted   $\underline s$ and homeomorphic to a disjoint union of a finite number of circles.   A cyclic set of edges~$s$ is a {\it cycle} if~$\underline s$ is a single circle.
 A cycle  is {\it even} (respectively, {\it odd})  if   it   includes an even (respectively, {\it odd})  number of edges of~$\Gamma$.  Any even cycle   has a unique partition into two   subsets  called the {\it halves}  such that the edges belonging to the same half have no common vertices.

\subsection{The even-cycle gliding system}\label{theglidingsystemcycles} Let   ${\Gamma}$ be a graph with   the set of edges~$E$, and let $G=G(\Gamma)=2^E$ be the power group   of $E$. Two  sets   $s,
t  \subset E$ are {\it independent} if     the edges belonging to $s$ have no
common vertices with the edges  belonging to~$t$. Such sets $s,t$ are necessarily disjoint.

\begin{lemma}\label{glidi}  The even cycles   in ${\Gamma}$ in the role of glides together with the
independence relation   above form   a
  regular   gliding system in   $G$.
\end{lemma}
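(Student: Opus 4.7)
The plan is to verify the axioms of a gliding system one by one, all of them being essentially immediate from the definitions, and then derive regularity by reducing to the general Lemma~\ref{glidi-}.

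First I would check that even cycles give valid glides: any even cycle is by definition a finite set of at least two edges of $\Gamma$, hence a non-empty element of $G = 2^E$, so it differs from the identity $1 = \emptyset$. Since the power group satisfies $s^{-1} = s$ for every $s \in G$, the collection $\mathcal{G}$ of even cycles is trivially closed under inversion, i.e.\ $\overline{\mathcal{G}} = \mathcal{G}$. The independence relation $\mathcal{I}$, defined by requiring that edges of $s$ share no vertices with edges of $t$, is visibly symmetric; combined with $s^{-1} = s$ this gives $(s,t) \in \mathcal{I} \Rightarrow (s^{-1},t), (t, s) \in \mathcal{I}$.

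Next I would handle the multiplicative axioms for a pair of independent even cycles $s, t$. Since $G$ is abelian we have $st = ts$ for free. If $s$ and $t$ are independent then in particular they share no edges (a common edge would have common endpoints), so $s \cap t = \emptyset$; consequently $st = (s\cup t)\setminus(s\cap t) = s \cup t$, which is non-empty because $s$ itself is. Therefore $st \neq 1$, completing the verification that $(\mathcal{G}, \mathcal{I})$ is a gliding system.

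For regularity I would invoke Lemma~\ref{glidi-} applied to the group $H = \mathbb{Z}/2\mathbb{Z}$: the isomorphism $2^E \simeq H^E$ sending a subset of $E$ to its characteristic function identifies the support of a subset with the subset itself, so the independence condition for even cycles is precisely the statement that the supports of independent glides are disjoint in $E$. Lemma~\ref{glidi-} then immediately yields regularity of the even-cycle gliding system. I do not anticipate any genuine obstacle here; the whole lemma amounts to checking that the combinatorial setup aligns with the abstract framework already developed, with Lemma~\ref{glidi-} doing the only non-trivial work.
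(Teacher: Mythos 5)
Your proof is correct and follows the same route as the paper: the paper dispatches the gliding-system axioms as "straightforward" (which you spell out in more detail, all correctly) and derives regularity by citing Lemma~\ref{glidi-} via the identification $2^E \simeq H^E$ with $H = \ZZ/2\ZZ$, noting that independence (disjoint vertex sets) implies disjoint supports.
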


    All axioms of a gliding system are straightforward.
The    regularity follows from
Lemma~\ref{glidi-}. We call the resulting gliding system in $G$ the \emph{even-cycle} gliding system. By Section~\ref{cubecomplexX}, it
determines a cubed complex $ X_G $ with $0$-skeleton~$G $.
By Corollary~\ref{simpleNEWBIS},   $ X_G $  is
nonpositively curved.





We show how to orient  $ G$   in the sense of
Section~\ref{prelimArtin++}.
  Pick an element $e_s\in s$ in every even cycle $s\subset E$.
 A 1-cell of $X_G$
relates two 0-cells $A,B\subset E$ such that $
AB=(A\setminus B)\cup (B\setminus A)\subset E$ is an even cycle.  Then
  $e_{AB} \in AB$ belongs either to~$A$ or to~$B$. We orient this 1-cell   towards
  the 0-cell containing $e_{AB}$. It is easy to see that this procedure defines an orientation on
  $G$.  By Section~\ref{prelimArtin++}, the latter determines a typing  homomorphism $ \pi_1(X_G, A)\to \A(G) $ for   $A\in  G$.

All cycles in~$\Gamma$ (even and odd)  with the   independence relation   above also form a regular gliding system and yield a  nonpositively curved cubed complex. Some of our results extend to this gliding system but we will not study it.

\subsection{Perfect matchings}\label{newDimer coverings}   Let   ${\Gamma}$ be a graph with   the set of edges~$E$.  We provide the power group~$G=2^E$
with the even-cycle gliding system. A {\it perfect matching}, or a {\it dimer covering}, on~${\Gamma}$ is a
subset   of   $E$  such that every vertex of ${\Gamma}$ is incident to
exactly one edge  in this subset.
 Let   ${\D}=\D({\Gamma})\subset G $
be the set (possibly, empty) of all perfect matchings on~${\Gamma}$.
 By
Section~\ref{cubecomplexX+}, this set    determines a
cubed complex $X_{\D} =X_{\D}  (\Gamma) \subset X_G$ with
0-skeleton~${\D}$. We call $X_{\D}$ the {\it dimer complex} of
${\Gamma}$. By definition, two perfect matchings $A, B\subset E$ are connected by an edge in $X_\D$ if and only if $ AB \subset E$ is an even cycle.   Note   that if $ AB$ is a cycle then this cycle is    even
with   halves $ A\setminus B$ and $ B\setminus  A$.

\begin{lemma}\label{dimersMM} The set of perfect matchings
 ${\D}\subset G$  satisfies  the cube condition of   Section \ref{cubecomplexX+} and the square condition     of Section \ref{incl++}.
\end{lemma}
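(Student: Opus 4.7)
The plan is to verify both conditions by a direct vertex-by-vertex analysis of the incidence structure, using that independence of even cycles in the gliding system of Section~\ref{theglidingsystemcycles} means disjointness of vertex sets.

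First, I would record the key observation: for an even cycle $s\subset E$ and a perfect matching $A\in \D$, the set $sA=(A\cup s)\setminus (A\cap s)$ lies in $\D$ if and only if $A\cap s$ is one of the two halves of~$s$; equivalently, $A$ meets $s$ in every other edge of the cycle. This is the geometric content of a single gliding step on a perfect matching.

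For the square condition, suppose $A\in \D$ and $s,t$ are independent even cycles with $sA,tA\in \D$. By independence, $\partial s\cap\partial t=\emptyset$, and in particular $s\cap t=\emptyset$, so in the power group $stA=(s\cup t)A$. I would then check at each vertex $v$ of $\Gamma$ that $stA$ has exactly one incident edge: if $v\notin \partial s\cup\partial t$, the unique $A$-edge at $v$ survives; if $v\in\partial s$, then $v\notin\partial t$, so the $stA$-edge at $v$ agrees with the $sA$-edge at $v$, of which there is exactly one since $sA\in\D$; and symmetrically for $v\in\partial t$. Hence $stA\in\D$.

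For the cube condition the argument is essentially the same with three terms. Given $A, s_1, s_2, s_3$ with $s_1,s_2,s_3$ pairwise independent and the six elements $s_iA$, $s_is_jA$ all in $\D$, the sets $\partial s_1,\partial s_2,\partial s_3$ are pairwise disjoint and the $s_i$ themselves are pairwise disjoint subsets of $E$, so $s_1s_2s_3A=(s_1\cup s_2\cup s_3)A$. At a vertex $v$ outside $\partial s_1\cup\partial s_2\cup\partial s_3$ the $A$-edge at $v$ is preserved, and at a vertex $v\in\partial s_i$ (which lies in no other $\partial s_j$) the edge of $s_1s_2s_3A$ at $v$ coincides with the edge of $s_iA$ at $v$, giving exactly one incident edge. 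Thus $s_1s_2s_3A\in\D$. I do not expect any real obstacle: the whole lemma reduces to the fact that independence of glides makes the three partial modifications act on disjoint parts of the vertex set of $\Gamma$, so that checking the perfect-matching property reduces to checking it on each $s_iA$ and on $A$ separately.
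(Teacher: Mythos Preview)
Your argument is correct and follows essentially the same idea as the paper: independence of even cycles means disjoint vertex sets, so the gliding modifications act on disjoint parts of $\Gamma$ and the perfect-matching property can be checked locally. The one difference is organizational: the paper observes that the cube condition follows from the square condition in general (take $B=s_3A\in\D$; then $s_1B,s_2B\in\D$ and the square condition gives $s_1s_2B=s_1s_2s_3A\in\D$), and so only verifies the square condition. Your direct verification of the cube case works fine but is slightly redundant, and in fact your argument shows more than required, since you never use the hypotheses $s_is_jA\in\D$.
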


\begin{proof}
The cube condition follows from the square condition. The latter says
that for any $A\in {\D}$ and any independent even cycles $s,t$ in
${\Gamma}$ such that $sA, tA\in {\D}$, we must  have $stA\in
{\D}$.
The inclusions $A, sA \in {\D}$ imply that    $s\cap A$ and $s\setminus A$ are the   halves of $s$. The inclusions $A, tA \in {\D}$ imply that  $t\cap A$ and $t\setminus A$ are the   halves of $t$. The independence of $s, t$ ensures that
  $s,t$ are
disjoint and   incident to disjoints sets of vertices.
The set   $stA\subset E$ is obtained from   $A $ through  simultaneous
replacement of the half $s\cap A$ of $s$ and the half $t\cap A$ of $t$ with the complementary halves.   It is clear   that $stA$ is a dimer
covering.
\end{proof}

 \begin{theor}\label{sspeccaseW} The dimer complex $X_{\D}$  is
nonpositively curved.
\end{theor}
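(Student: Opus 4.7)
The plan is to invoke the general machinery developed in Sections~\ref{ From glides to cubed   complexes} and~\ref{Cycles in graphs} and observe that all the hypotheses have already been verified. Specifically, by Lemma~\ref{glidi}, the even-cycle gliding system in the power group $G=2^E$ is regular. Consequently, the hypothesis of Corollary~\ref{simpleNEWBIS} is satisfied, so to conclude that $X_\D$ is nonpositively curved, it suffices to check that $\D=\D(\Gamma) \subset G$ satisfies the cube condition of Section~\ref{cubecomplexX+}.

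The cube condition for $\D$ has already been established in Lemma~\ref{dimersMM} (as an immediate consequence of the square condition verified there). Applying Corollary~\ref{simpleNEWBIS} to $\E=\D$ yields the theorem.

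In other words, the proof is a one-line combination of three earlier results: the regularity of the even-cycle gliding system (Lemma~\ref{glidi}), the cube condition on the set of perfect matchings (Lemma~\ref{dimersMM}), and the nonpositive curvature criterion for glide complexes over a regular gliding system (Corollary~\ref{simpleNEWBIS}). There is no genuine obstacle to overcome; the work was done upstream when verifying the square condition for $\D$, and the present statement is essentially a corollary assembling those pieces.
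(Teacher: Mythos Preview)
Your proposal is correct and matches the paper's own proof, which simply states that the theorem follows from Lemma~\ref{dimersMM} and Corollary~\ref{simpleNEWBIS}. Your explicit mention of Lemma~\ref{glidi} (regularity of the even-cycle gliding system) just makes transparent the hypothesis needed to invoke Corollary~\ref{simpleNEWBIS}, but the argument is identical.
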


This theorem follows from Lemma~\ref{dimersMM} and Corollary
\ref{simpleNEWBIS}.

We next determine when two perfect matchings $A,B $ in~$\Gamma$ belong to the same connected component of $X_\D$. We say that $A, B$ are {\it congruent} if the set
  $AB=(A \setminus B) \cup (B\setminus
A)$ is  finite.

 \begin{lemma}\label{conn---} Two perfect matchings in~$\Gamma$ belong to the same connected component of $X_\D$ if and only if they are  congruent.
\end{lemma}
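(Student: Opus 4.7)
The plan is to prove both directions by directly analyzing the symmetric difference $AB$ in the power group $G = 2^E$.

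For the \emph{only if} direction, I would observe that a path in $X_\D$ from $A$ to $B$ corresponds to a sequence of glidings $A = A_0, A_1, \ldots, A_n = B$ with $A_{i+1} = s_i A_i$ for even cycles $s_i \subset E$. Working in the abelian power group, $B = s_{n-1} \cdots s_0 A$, so $AB = s_0 + s_1 + \cdots + s_{n-1}$, a finite subset of the finite union $s_0 \cup \cdots \cup s_{n-1}$. Hence $A$ and $B$ are congruent.

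For the \emph{if} direction, assume $AB$ is finite. The key structural observation is that the subgraph $\underline{AB}$ is a finite $2$-regular subgraph of $\Gamma$ whose edges alternate between $A \setminus B$ and $B \setminus A$. Indeed, any vertex $v$ incident to an edge of $AB$ is incident to exactly one edge of $A$ (because $A$ is perfect) and exactly one edge of $B$; these two edges must differ (otherwise $v$ would not touch $AB$), and both lie in $AB$. Thus $\underline{AB}$ decomposes as a disjoint union of even cycles $s_1, \ldots, s_k$, which are pairwise independent in the even-cycle gliding system since they share no vertices.

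I then apply these cycles successively. At each stage $i$, the matching $A_i = s_i \cdots s_1 A$ differs from $A$ only on edges in $s_1 \cup \cdots \cup s_i$, which is disjoint from $s_{i+1}$. So $A_i \cap s_{i+1} = A \cap s_{i+1}$, which is one of the two halves of the even cycle $s_{i+1}$ (by the alternation property). Therefore the gliding along $s_{i+1}$ is defined and produces a perfect matching $A_{i+1}$, giving an edge of $X_\D$ from $A_i$ to $A_{i+1}$. After all $k$ glidings, $A_k = s_k \cdots s_1 A = (AB) A = B$ in the power group, exhibiting a path in $X_\D$ from $A$ to $B$.

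No real obstacle is expected; the main content is the structural lemma that $\underline{AB}$ is a disjoint union of even cycles, which follows once one tracks, vertex by vertex, the two distinguished edges coming from $A$ and $B$. The rest is bookkeeping in the abelian group $2^E$ together with the observation that disjoint cycles remain unaffected by one another's glidings.
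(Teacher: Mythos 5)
Your proof is correct and follows essentially the same route as the paper: decompose the finite symmetric difference $AB$ into pairwise independent even cycles and glide along them one at a time, with the converse direction coming from the finiteness of each individual glide. You spell out a couple of steps (why each intermediate $A_i$ is a perfect matching, why $A_i \cap s_{i+1}$ is a half of $s_{i+1}$) that the paper leaves implicit, but the core idea and the key structural lemma are identical.
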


\begin{proof}  Pick any   $A,B \in \D \subset X_\D$. If $A, B$ belong to the same connected component of $X_\D$, then $A$ may be connected to $B$ by a sequence of edges in $X_\D$. In other words, $A$ can be obtained from  $B$ by a finite sequence of glidings along even  cycles. Each such gliding removes a finite set of edges from the   matching and adds another finite set of edges. Hence,
  $AB=(A \setminus B) \cup (B\setminus
A)$ is a finite set. Conversely, suppose that
 the set   $AB $ is  finite. The definition of a perfect matching implies  that $AB$ is cyclic. Any finite cyclic set of edges in $\Gamma$ splits uniquely      as  a   union of    independent   cycles. Let $AB=\cup_{i=1}^n s_i$ be such a splitting with $n\geq 0$.   All the  cycles $s_1, \ldots , s_n$   are even: their halves are their intersections   with $A \setminus B$ and   $B \setminus A$. Then $$A, \, s_1A,  \, s_{2} s_1A,  \,   \ldots ,  \, s_n\cdots s_2s_1 A=B$$ is a sequence of perfect matchings in which every term is obtain from the previous one by gliding along an even cycle. Hence $A$ is connected to $B$ by a sequence of edges in $X_\D$.
\end{proof}

 For a  perfect matching $A$ of~$\Gamma$, the group $\pi_1(X_\D, A)$ is called the \emph{dimer group of~$\Gamma$ at $A$} or, shorter, the \emph{dimer group of~$A$}.
 Lemma~\ref{conn---}  implies that the  dimer groups of congruent perfect matchings are isomorphic.
We will see in Section~\ref{redu} that  the isomorphism  in question
  may be chosen in a canonical way. More precisely, for any congruent perfect matchings $A, B$ in~$\Gamma$ we will define a canonical isomorphism ${i}_{A,B}:\pi_1(X_\D, A) \to \pi_1(X_\D, B)$ and for  any  $A, B, C\in \D$ we will define  an element $\overline{ABCA} $  of  $\pi_1(X_\D, A) $ such that

(i) ${i}_{A,A}=\id$ and ${i}_{B,A}={i}_{A,B}^{-1}$ for any $A, B\in \D$;

(ii)    the automorphism ${i}_{C,A} {i}_{B,C}{i}_{A, B}  $  of  $\pi_1(X_\D, A) $ is the conjugation by $\overline{ABCA} $ for any   $A, B, C\in \D$.

\subsection{The case of a finite graph}\label{thecaseoffinitegraphs} A graph is   \emph{finite} if it has a finite number of vertices and edges. Let ${\Gamma}$ be a finite graph with the set of edges~$E$ and let $G=2^E$ be the power group of~$E$
with the even-cycle gliding system. The   cubed complex~$X_G$    is a finite CW-space and so is compact and  finite dimensional. Since $X_G$ is non-positively curved, it is aspherical.  For  $A\in  G$, the group $\pi_1(X_G, A)$ shares    the properties of the fundamental groups of compact finite dimensional nonpositively curved cubed complexes listed in Section~\ref{Cubed complexes}. Also,   the right-angled Artin   group $\A(G) $ is    finitely generated, and   the typing   homomorphism $ \pi_1(X_G, A)\to \A (G)$ determined by any orientation of~$G$ is   injective  (Corollary \ref{simpleNEW++-eee}). Therefore $\pi_1(X_G, A)$ shares     the properties of   finitely generated right-angled Artin  groups listed in
  Corollary~\ref{simpleNEW++-nilp}. The same arguments yield similar statements   for the dimer complex $X_\D$ of~$\Gamma$  and the dimer groups of perfect matchings in~$\Gamma$.
By Corollary~\ref{simpleNEW++-},    the inclusion   homomorphism $ \pi_1(X_\D, A)\to \pi_1(X_G, A)$ is injective for all  $A\in \D=\D(\Gamma)$.

 \begin{lemma}\label{conn} The dimer complex  of a finite graph  is path connected.
\end{lemma}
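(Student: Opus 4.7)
The plan is to reduce the statement directly to Lemma~\ref{conn---} together with the finiteness of the edge set. Since $X_\D$ is a CW-complex, it is path connected if and only if any two $0$-cells lie in the same connected component. The $0$-cells of $X_\D$ are precisely the perfect matchings of~$\Gamma$, so I only need to show that any two perfect matchings $A,B \in \D(\Gamma)$ lie in the same connected component of~$X_\D$.

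By Lemma~\ref{conn---}, $A$ and $B$ lie in the same connected component of~$X_\D$ if and only if they are congruent, that is, $AB = (A\setminus B) \cup (B\setminus A)$ is a finite subset of~$E$. Since $\Gamma$ is finite by assumption, the set~$E$ of all edges is finite, so every subset of~$E$ is automatically finite; in particular $AB$ is finite, so $A$ and $B$ are congruent.

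There is no real obstacle: the content has already been packaged into Lemma~\ref{conn---}, and the role of the finiteness hypothesis is simply to guarantee automatic congruence. I would therefore write the proof as a two-sentence deduction: apply Lemma~\ref{conn---} and note that finiteness of~$E$ makes the congruence condition vacuous.
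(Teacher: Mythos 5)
Your proof is correct and follows exactly the same route as the paper: invoke Lemma~\ref{conn---} and observe that in a finite graph the congruence condition is automatic because $E$ is finite. The paper states this in one sentence; you have simply spelled out the reduction.
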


This lemma is a direct consequence of Lemma~\ref{conn---} and the obvious fact that all perfect matchings of a finite graph are congruent.
Lemma~\ref{conn}  implies that the  dimer group $ \pi_1(X_\D, A) $   does not depend on  the choice of $A\in \D$  up to    isomorphism.
This group, considered up to  isomorphism, is called     the \emph{dimer group} of~$\Gamma$ and denoted $D({\Gamma})$. By definition, if $\Gamma$ has no perfect matchings, then $D(\Gamma)=\{1\}$.

\subsection{Examples}\label{rerere} 1. Let ${\Gamma}$ be a  triangle (with 3   vertices and 3
edges). The  set of glides in $G=G(\Gamma)$  is empty, $X_G  =G$ consists of 8 points,  $X_\D  =\D  =\emptyset$, and $D(\Gamma)=\{1\}$.

2. Let ${\Gamma}$ be a  square  (with 4   vertices and 4
edges). Then $G=G(\Gamma)$ has   one glide, $X_G  $ is a disjoint union of 8  closed intervals,  $X_\D  $ is one of them, and $D(\Gamma)=\{1\}$.

3.   More generally, let ${\Gamma}$ be formed by $n\geq 1$ cyclically connected vertices and $\D=\D(\Gamma )$. If $n$ is odd, then     $X_\D =\D =\emptyset$. If $n$ is even, then ${\Gamma}$ has two perfect matchings,      $X_\D $
  is a segment, and $D(\Gamma)=\{1\}$.

4. Let ${\Gamma}$ be  formed by 2 vertices
  and $3 $   connecting them edges. Then $G=G(\Gamma)$ has 3 glides and $X_{G} $ is a disjoint union of two complete graphs on 4 vertices. The space $X_\D $ is formed by 3 vertices and 3 edges of one of these   graphs. Clearly, $D(\Gamma)=\ZZ$.

 5. More generally, for   $n\geq 1$, consider the graph ${\Gamma}^n$ formed by 2 vertices
  and~$n $   connecting them edges. A perfect matching in ${\Gamma}^n$ consists of a single edge, and so, the set $\D=\D(\Gamma^n)$ has $n$ elements. The graph ${\Gamma}^n$ has $n(n-1)/2$
 cycles, all  of length 2 and none of them independent.    The complex $X_{\D} $ is
a complete graph on~$n$ vertices. Hence, $D({\Gamma}^n)$ is a free group of rank $(n-1)(n-2)/2$.


  \subsection{Remarks}\label{rexexexexsere} 1. If a  graph   does not have   perfect matchings, then one can  subdivide some of its edges into two subedges so that the resulting graph has   perfect matchings and the theory above applies.   Subdivision of edges into three or more  subedges is redundant. If
a graph  ${\Gamma}'$   is obtained from a  graph ${\Gamma}$   by adding an even number of new vertices inside
an   edge, then there is a canonical bijection $\D( \Gamma) \approx \D( \Gamma') $ which extends to a cubic homeomorphism  $X_{\D} ( \Gamma)   \approx X_{\D} ( \Gamma') $.

   2. If ${\Gamma}$ is a disjoint union of graphs $ {\Gamma}_1$, $ {\Gamma}_2$, then $G=G(\Gamma)=G_1\times G_2$, where $G_i=G(\Gamma_i)$ for $i=1,2$, and
  $X_G   = X_{G_1}   \times X_{G_2}  $. 
  Similarly, $\D=\D(\Gamma)= \D_1 \times \D_2$, where $\D_i=\D(\Gamma_i)$ for $i=1,2$, and  $X_\D=X_{\D_1}\times X_{\D_2}$.
  If    $\Gamma_1$, $\Gamma_2$ are finite graphs, then $D(\Gamma)=D(\Gamma_1)\times D(\Gamma_2)$.

  3. The previous remark    and Example~\ref{rerere}.4 imply that any free abelian group  of
 finite rank is realizable  as the  dimer group of a   finite graph. Other
 abelian groups   cannot be realized as dimer groups of finite graphs
 because  the latter are finitely generated and torsion-free. It would be interesting to find a finite graph whose dimer group  is not a product of free groups of rank $n(n-1)/2$ with $n \geq 1$.

 4. Consider  finite graphs ${\Gamma}_1$, ${\Gamma}_2$
admitting perfect matchings. Let ${\Gamma}'$ be   obtained from   $\Gamma={\Gamma}_1 \amalg {\Gamma}_2$ by adding an edge
connecting a vertex of ${\Gamma}_1$ with a vertex of ${\Gamma}_2$. Then
 $D({\Gamma}')=D({\Gamma} )$ and $X_\D({\Gamma}')= X_\D({\Gamma} )$.
This implies that for   any  finite graph, there is a connected finite graph with the same dimer group.

\section{A presentation  of the dimer group}\label{redu}

We give   a presentation of the dimer group of a   graph  by generators and relations. We begin with a  general result concerning so-called straight  CW-spaces.

\subsection{Straight CW-spaces}\label{mincu} The cellular structure  of a CW-space   is commonly   used to read  from its 2-skeleton  a    presentation  of    the fundamental group by generators and relations.  We discuss   a different  method   producing a presentation of   the fundamental group for   some CW-spaces.
 We call a  CW-space~$X$ \emph{straight} if   all closed cells of~$X$ are embedded balls and   for
  any 0-cells $A,B \in X$   the intersection of all closed cells of~$X$  containing  $A,B $  is a
closed  cell  of~$X$. This intersection is    called   the \emph{hull} of $A,B$.   Connecting   $A$ and~$B$   by a path in their hull, we obtain a well-defined homotopy class of paths  from~$A$ to~$B$  in $X$  denoted $\overline{AB}$. For a  finite sequence   of 0-cells $A_0, A_1, \ldots, A_m\ $ the product of    $\overline{A_0A_1}, \overline{A_1A_2}, \ldots, \overline{A_{m-1}A_m}$ is a  well-defined homotopy class of paths    from $A_0$ to~$A_m$. It is  denoted  $\overline{A_0 A_1 \ldots A_m}$.

 \begin{lemma}\label{phi-+}    Let   $X$ be a straight CW-space, and let $ Y=X^{(0)}$ be the 0-skeleton of~$X$.  Then $X$ is path-connected and for each  $A_0 \in Y
  $,
   the group $\pi_1(X, A_0)$ is   isomorphic to  the group with generators $\{x_{A,B}\}_{A,B\in Y}$ subject to the following relations: $x_{A_0,A}=1$ for all $A\in Y$ and
  $x_{A,C}=x_{A,B}\,  x_{B,C}$ for each triple   $A,B,C \in Y$ such that there is a closed cell   of $X$   containing  $\{A,B,C\}$.
    \end{lemma}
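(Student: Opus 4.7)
The plan is to construct an isomorphism $\phi\colon\Pi\to\pi_1(X,A_0)$, where $\Pi$ denotes the group with the given presentation, and to deduce path-connectedness as a byproduct. Path-connectedness is immediate from the definition of straightness: any two 0-cells $A,B$ share a common closed cell—an embedded ball—so $\overline{AB}$ connects them, and every point of $X$ lies in some closed cell, a ball meeting $Y$. I would define $\phi$ on generators by $\phi(x_{A,B})=\overline{A_0 A B A_0}$. To check $\phi(x_{A_0,A})=1$, note that $\overline{A_0 A A_0}$ is a loop inside the ball $\mathrm{hull}(A_0,A)$ and is therefore null-homotopic. For the relation $x_{A,C}=x_{A,B}\,x_{B,C}$ when $\{A,B,C\}$ lies in a common closed cell $c$, I would expand $\phi(x_{A,B})\phi(x_{B,C})$ and cancel the intermediate subword $\overline{B A_0 B}$ (null-homotopic in $\mathrm{hull}(A_0,B)$); inside the simply connected ball $c$, which contains $\mathrm{hull}(A,B)$, $\mathrm{hull}(B,C)$, and $\mathrm{hull}(A,C)$, one has $\overline{AB}\cdot\overline{BC}=\overline{AC}$, which reassembles to $\phi(x_{A,C})$.

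For surjectivity I would use cellular approximation to homotope a given loop $\gamma$ based at $A_0$ into the 1-skeleton $X^{(1)}$ and decompose it as a sequence of edge traversals connecting 0-cells $A_0=A_0,A_1,\ldots,A_n=A_0$. Each consecutive pair lies in a common closed edge (a ball), so the traversal from $A_i$ to $A_{i+1}$ is homotopic to $\overline{A_i A_{i+1}}$ and $\gamma\simeq\overline{A_0 A_1\cdots A_{n-1}A_0}$. Using the identity $\overline{A_0 A_i A_{i+1} A_0}\cdot\overline{A_0 A_{i+1} A_{i+2} A_0}=\overline{A_0 A_i A_{i+1} A_{i+2} A_0}$—obtained by cancelling the null-homotopic $\overline{A_{i+1} A_0 A_{i+1}}$ at the junction—one telescopes to rewrite $\gamma=\phi(x_{A_1,A_2}\,x_{A_2,A_3}\cdots x_{A_{n-2},A_{n-1}})$, placing $\gamma$ in the image of $\phi$.

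The main obstacle is injectivity. Given $w\in\ker\phi$ and a null-homotopy $H\colon D^2\to X$ of the representing loop, I would adapt $H$ to the cell structure: by a Lebesgue-number argument applied to the open-cell cover of $X$, triangulate $D^2$ finely enough that $H$ maps each closed 2-simplex into a single closed cell of $X$; then, inductively over the skeleta of the triangulation and exploiting simple connectivity of each closed cell, homotope $H$ rel $\partial D^2$ so that every triangulation vertex lands on a 0-cell of $X$ inside the closed cell assigned to its image. Each 2-simplex with vertex set $\{A,B,C\}$ then has image in a common closed cell, and its boundary word yields exactly the defining relation $x_{A,B}\,x_{B,C}=x_{A,C}$ of $\Pi$. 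Reading off the boundary of $D^2$ as a word in these relations expresses $w$ as a consequence of the relations, giving $w=1$ in $\Pi$. The technically delicate step is this cellular refinement of $H$—the simultaneous choice of triangulation compatible with $H$ and the skeleton-by-skeleton homotopy moving vertices into $Y$ without leaving their assigned closed cells—which is standard CW-topological bookkeeping but requires care.
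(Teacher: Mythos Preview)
Your definition of $\phi$ and the verification of the relations match the paper exactly, as does your surjectivity argument. The divergence is in injectivity: the paper avoids your null-homotopy analysis entirely by constructing an explicit inverse $\psi\colon\pi_1(X,A_0)\to\Pi$, sending an edge-loop through $A_0,A_1,\ldots,A_m=A_0$ in $X^{(1)}$ to $x_{A_0,A_1}x_{A_1,A_2}\cdots x_{A_{m-1},A_m}$, and checking well-definedness using only the standard fact that homotopic edge-loops in a CW complex differ by insertions of spurs $\overline{ABA}$ and of boundaries of 2-cells. Both moves are absorbed by the defining relations (the 2-cell boundary because all its vertices lie in one closed cell), and then $\phi\psi=\id$, $\psi\phi=\id$ are routine. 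This is shorter and sidesteps any triangulation of a null-homotopy.

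Your van Kampen--style argument is sound in outline but has a technical slip: the open cells of a CW complex do \emph{not} in general form an open cover of $X$ (open cells need not be open subsets), so the Lebesgue-number step as written does not go through, and there is no reason a priori that small simplices in $D^2$ map into single closed cells. The repair is to first cellularly approximate $H$ rel $\partial D^2$ into $X^{(2)}$ and then use the induced cell decomposition of $D^2$ rather than an ad hoc metric triangulation; after that your vertex-pushing and boundary-reading are correct. But note that this repair essentially reproves the very fact the paper invokes off the shelf, so the paper's route via $\psi$ is the more economical one.
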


\begin{proof}  The path-connectedness of $X$ is obvious. Let $\Pi$ be the group defined by the generators
and relations   in the statement. The relation $x_{A_0,A}\,
x_{A,A}=x_{A_0,A}$ implies that $x_{A,A}=1$ for all $A\in Y$. Then
  $x_{A,B}\, x_{B,A}=x_{A,A}=1$   for all $A, B\in Y$ so that $x_{B,A}=x_{A,B}^{-1}$. In particular,
$x_{A,A_0}=x_{A_0,A}^{-1}=1$ for all $A\in Y$.

We define a   homomorphism $\phi:\Pi \to \pi_1(X, A_0)$ by $\phi(x_{A,B})=\overline{A_0 ABA_0}$. This definition is compatible with the relations in $\Pi$. Indeed, for   $A\in Y$,
$$\phi(x_{A_0,A}) =\overline{A_0 A_0 A A_0}=\overline{A_0 A  A_0} =1.$$
If  $A,B,C\in Y  $ lie in a closed  cell  of $X$, then $\overline{  A B C}=\overline{  A C}$ and therefore
 $$\phi(x_{A,B}\,  x_{B,C} )  =\overline{A_0 A B A_0   BC A_0 } =\overline{A_0 A B  C A_0 }
 = \overline{ A_0 A C A_0} =\phi(x_{A, C} ) .$$
 We next define a homomorphism $\psi: \pi_1(X, A_0) \to \Pi$.  Note   that if   $A,B \in Y$ are connected by a (closed) 1-cell~$e$ in~$X$, then $e$ is their hull   and  $\overline{AB}$ is  the homotopy class of~$e$  viewed as a path from $A$ to $B$. Any  $\alpha \in \pi_1(X, A_0)$ may be represented by  a loop in the 1-skeleton $X^{(1)}$ of~$X$   traversing consecutively
  0-cells $A_0, A_1,..., A_m=A_0$.  Then $\alpha = \overline{A_0 A_1 \ldots A_m}$, and we set
  $$\psi(\alpha)= x_{A_0,A_1} \,  x_{A_1,A_2}\,  \cdots  x_{A_{m-1},A_m} \in \Pi.$$
  The right-hand side does not depend on the choice of the loop in $X^{(1)}$ representing~$\alpha$. Any two such loops may be related by the transformations inserting  loops of type $\overline{A B A}$ where $A,B\in Y$ and  loops of type $\overline{B_1\ldots B_m}$   where $B_1, B_2, \ldots,  B_m =B_1 $ are consecutive 0-cells  lying on the boundary of a 2-cell   of $X$. The invariance of $\psi(\alpha)$ under these transformations
   follows  from the equalities $x_{A,B}\, x_{B,A} =1$ and
   $$ \prod_{i=0}^{m-1}  x_{B_i,B_{i+1}}=   1$$
   where  we use that  $B_1, \ldots,  B_m $  lie in the same  closed cell   of $X$. It is clear that
   $$\phi \psi (\alpha)=\phi( \prod_{i=0}^{m-1}  x_{A_i,A_{i+1}} ) = \prod_{i=0}^{m-1}  \overline{A_0 A_i A_{i+1} A_0} = \overline{A_0 A_1 \ldots A_m}=\alpha.$$
   It is easy to deduce from the definitions that $\psi \phi=\id$. Thus, $\phi$ and $\psi$ are mutually inverse isomorphisms.
 \end{proof}

Since a straight  CW space~$X$ is path-connected, the group $\pi_1(X,A)$  does not depend on the choice of $A\in  X^{(0)}$  at least up to isomorphism.  The isomorphisms in question can be chosen in a canonical way as follows. Given   $A,B\in X^{(0)}$, we let
${i}_{A,B}:\pi_1(X , A) \to \pi_1(X , B)$ be the conjugation by $\overline{AB}$:
$${i}_{A,B} (\alpha)=\overline{BA}  \alpha \overline{AB} \in \pi_1(X , B) \quad {\text{for}} \quad \alpha\in \pi_1(X , A) .$$
Clearly,
  ${i}_{A,A}=\id$ and ${i}_{B,A}={i}_{A,B}^{-1}$ for any $A, B\in X^{(0)}$. For any   $A, B, C\in  X^{(0)}$,
   the automorphism ${i}_{C,A} {i}_{B,C}{i}_{A, B}  $  of  $\pi_1(X, A) $ is the conjugation by $\overline{ABCA} $.

\subsection{A presentation of the dimer  group}\label{newDimer coverings+}   Consider a  graph
   ${\Gamma}$ and the  dimer  complex $X_\D$ where  $\D=\D(\Gamma)$ is the set   of   perfect matchings in~$ \Gamma$.

   \begin{lemma}\label{strcubi}    All connected components of  $X_\D$ are straight    CW-spaces.   \end{lemma}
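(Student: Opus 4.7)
The plan is to verify, for each connected component of $X_\D$, the two conditions in the definition of a straight CW-space: embeddedness of closed cells, and that the intersection of all closed cells through any two $0$-cells is again a cell. The first is immediate from Lemma~\ref{newlemmw-}, according to which every cube in $G$ is embedded in $X_G$. So the content of the lemma is to identify, for any two perfect matchings $A,B$ in the same component of $X_\D$, the intersection of all closed cubes of $X_\D$ containing both $A$ and $B$.

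Fix such $A$ and $B$. By Lemma~\ref{conn---}, the set $AB=(A\setminus B)\cup(B\setminus A)\subset E$ is finite, and because $A,B$ are perfect matchings it is cyclic. As in the proof of Lemma~\ref{conn---} it decomposes uniquely as a disjoint union $AB=s_1\sqcup\cdots\sqcup s_n$ of vertex-disjoint even cycles, namely the connected components of $\underline{AB}$. Set $T_0=\{s_1,\ldots,s_n\}$. This is a pre-cubic, and hence cubic (by Lemma~\ref{glidi-}), set of glides in $G=2^E$, and $[T_0]=s_1\cdots s_n=AB$, since products of pairwise disjoint subsets in the power group coincide with their union. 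The argument proving Lemma~\ref{dimersMM} applies to show that $[T]A\in\D$ for every $T\subseteq T_0$, so $(A,T_0)$ is an $n$-dimensional cube of $X_\D$ containing both $A$ and $B=[T_0]A$.

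It remains to show that every cube of $X_\D$ containing $A$ and $B$ has the form $(A,S)$ with $T_0\subseteq S$; then $(A,T_0)$ is a face of every such cube and is itself one of them, so it coincides with their intersection. Any cube through $A$ has a unique representative $(A,S)$ with basepoint $A$, since $S$ is cubic. If it also contains $B$, then $B=[T']A$ for some $T'\subseteq S$, and so $[T']=AB$ in $G$. The elements of $T'$ are pairwise independent and hence pairwise disjoint as subsets of $E$, whence $[T']=\bigsqcup_{s\in T'}s$, so $\underline{AB}=\bigsqcup_{s\in T'}\underline{s}$ is a decomposition of $\underline{AB}$ into vertex-disjoint connected subgraphs. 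Such a decomposition is unique -- it must consist of the connected components of $\underline{AB}$ -- so $T'=T_0$ and $T_0\subseteq S$, as desired. This uniqueness step is the heart of the argument and the only place where one really uses that the glides are even cycles in a graph rather than abstract independent elements of a gliding system.
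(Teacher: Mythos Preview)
Your proof is correct and follows essentially the same route as the paper's own argument: you identify the candidate hull as the cube $(A,T_0)$ where $T_0$ is the set of independent even cycles decomposing $AB$, and then use the uniqueness of this decomposition to show that any cube through $A$ and $B$ has $(A,T_0)$ as a face. The paper phrases the uniqueness step as ``a finite cyclic set of edges splits as a union of independent cycles in a unique way,'' while you cast it as uniqueness of the connected-component decomposition of $\underline{AB}$; these are the same observation.
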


   \begin{proof}  Consider any    $A,B \in \D $   lying in the same connected component     of~$X_\D$. By Lemma~\ref{conn---}, the set
  $AB=(A \setminus B) \cup (B\setminus
A)$  is   finite.  The proof of Lemma~\ref{conn---} shows that $AB$ is a  union of    independent  even cycles in~$\Gamma$. Denote the
set of these cycles by $S$. In the notation of Section~\ref{BU}, $[S]=AB$. As in the proof of Lemma~\ref{dimersMM}, we observe  that
for all $ T\subset S $, the set $[T] A$ is a perfect matching in~$\Gamma$. Recall that the closed cells of~$X_\D$ are embedded cubes. In particular, the based cube $(A,S)$ determines   a cube,~$Q$, in~$X_{\D}$ whose set of vertices contains  both $A$ and  $B=[S]A$. We claim that $Q$ is the hull of $A, B$. To prove this claim, it suffices to show that~$Q$ is a face of any cube $Q'$ in
$X_\D$ whose set of vertices contains   $A$ and  $B$. Such a cube $Q'$  can be represented   by a based cube
$(A,S')$ such that   $B=[T] A$ for some $T\subset S'$. We
have $[T]= BA^{-1} =BA=AB $. Since a finite cyclic set of edges
splits as a   union of independent cycles in a unique way, $T=S$. So,  $Q$ is a face of $Q'$.
 \end{proof}

  Lemma~\ref{strcubi}  allows us to apply  the  constructions  of Section~\ref{mincu} to~$X_\D$.  This gives the    isomorphisms between the dimer groups    formulated  at the end of
Section~\ref{newDimer coverings}.

We will denote the only edge of a perfect matching  $A\in \D$ adjacent to  a vertex~$v$ of~${\Gamma}$  by $A_v$.    A triple $A,B,C\in \D$ is said to be {\it flat} if for any vertex $v$ of ${\Gamma}$, at least two of the edges $A_v, B_v, C_v$ are equal.

\begin{theor}\label{phi-+VBBB}     The dimer group of any   $A_0\in \D$   is   generated by the set $\{x_{A,B}\}_{A,B }$ where $(A,B)$ runs over all ordered pairs of  perfect matchings in~$\Gamma$ congruent to~$A_0$. The defining relations: $x_{A_0,A}=1$ for each  $A\in \D$ congruent to $A_0$ and
  $x_{A,C}=x_{A,B}\,  x_{B,C}$ for each  flat triple   $A,B,C  $ of  perfect matchings   congruent to $A_0$. \end{theor}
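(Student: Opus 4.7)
The plan is to derive the theorem by applying the general presentation Lemma~\ref{phi-+} to the connected component of $A_0$ in the dimer complex $X_\D$. Lemma~\ref{strcubi} asserts that each component of~$X_\D$ is a straight CW-space, and by Lemma~\ref{conn---} the $0$-skeleton of the component containing $A_0$ is precisely the set of perfect matchings in~$\Gamma$ congruent to~$A_0$. Lemma~\ref{phi-+} then produces a presentation of $\pi_1(X_\D,A_0)$ with generators $x_{A,B}$ indexed by ordered pairs of congruent perfect matchings, the relations $x_{A_0,A}=1$, and the relations $x_{A,C}=x_{A,B}\,x_{B,C}$ whenever $A,B,C$ lie in a common closed cell of~$X_\D$. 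The whole task therefore reduces to proving that a triple $A,B,C$ of congruent perfect matchings is flat in the sense of the theorem if and only if $A,B,C$ are vertices of a single cube of~$X_\D$.

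For the easy direction, suppose $A,B,C$ are vertices of a cube of $X_\D$ represented by a based cube $(D,S)$, where~$S$ is a cubic set of pairwise independent even cycles. Each vertex~$v$ of~$\Gamma$ is incident to at most one cycle in~$S$. If $v$ is incident to none, then $A_v=B_v=C_v=D_v$. If $v$ lies on a unique $s\in S$, then each of $A_v,B_v,C_v$ is one of the two edges of~$s$ incident to~$v$, so by the pigeonhole principle at least two of them coincide. Hence $(A,B,C)$ is flat.

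For the converse, assume $(A,B,C)$ is flat and consider the set $E_{ABC}\subset E$ of edges belonging to exactly one or exactly two of $A,B,C$. A case analysis at each vertex shows that the number of edges of $E_{ABC}$ incident to~$v$ equals $0$, $2$, or $3$ according to whether $\{A_v,B_v,C_v\}$ has $1$, $2$, or $3$ distinct elements; hence flatness is equivalent to the statement that $E_{ABC}$ is a cyclic subset of~$E$. As in the proof of Lemma~\ref{conn---}, $E_{ABC}$ then decomposes uniquely as a disjoint union of independent cycles $S=\{s_1,\dots,s_k\}$. Classifying each edge of $E_{ABC}$ as \emph{majority} (lying in two of $A,B,C$) or \emph{minority} (lying in only one), we see that at every vertex of any $s_i$ exactly one incident edge is of each type, so the types alternate along $s_i$ and $s_i$ is even. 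By regularity of the even-cycle gliding system (Lemma~\ref{glidi}), the pre-cubic set~$S$ is cubic.

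It remains to exhibit $(A,S)$ as a based cube of~$X_\D$ containing $A,B,C$ as vertices. A vertex-by-vertex check shows that $[T]A$ is a perfect matching for every $T\subset S$: at a vertex not lying on any~$s_i$ the edge of $[T]A$ is $A_v$, while at a vertex lying on a unique $s_i$ the edge of $[T]A$ is either $A_v$ or the other edge of $s_i$ incident to~$v$ depending on whether $s_i\in T$. Finally, $AB\subset E_{ABC}$ is cyclic, and at every vertex of any $s_i$ both incident $s_i$-edges lie in $AB$ or neither does; therefore $AB$ is the union of some subfamily $T_B\subset S$, which gives $B=[T_B]A$, and similarly $C=[T_C]A$ for some $T_C\subset S$. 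The main obstacle is the converse direction, specifically the vertex-by-vertex bookkeeping that simultaneously establishes cyclicity of $E_{ABC}$, evenness of the cycles~$s_i$, and alignment of $AB$ and $AC$ with the decomposition~$S$.
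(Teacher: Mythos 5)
Your proposal is correct and follows the same overall strategy as the paper: reduce to Lemma~\ref{phi-+} via Lemmas~\ref{strcubi} and~\ref{conn---}, then characterize the triples lying in a common cube of $X_\D$ as exactly the flat triples. The one mild variation is in the ``flat implies same cube'' direction: the paper first builds the perfect matching $K$ that at each vertex picks the edge lying in at least two of $A,B,C$, then bases the cube at $K$ and writes $A=[X]K$, $B=[Y]K$, $C=[Z]K$ with $X,Y,Z$ the decompositions of the pairwise independent cyclic sets $AK,BK,CK$; you instead base the cube at $A$ and decompose the cyclic set $E_{ABC}$ directly, but $E_{ABC}=AK\cup BK\cup CK$, so the underlying decomposition into independent even cycles is identical and only the choice of base vertex differs. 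In the converse direction your pigeonhole observation at each vertex of $\Gamma$ is a slightly more streamlined phrasing of the paper's argument using the partition $S=X\amalg Y\amalg Z$. The details you flag as ``bookkeeping'' (cyclicity of $E_{ABC}$ equivalent to flatness, alternation of majority/minority edges giving evenness, alignment of $AB$ and $AC$ with the cycle decomposition) all check out.
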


\begin{proof}   The claim follows from Lemmass~\ref{conn---},~\ref{phi-+}, and~\ref{strcubi}.  We need only to show that  three  perfect matchings $A,B,C \in \D$ are
   vertices of a cube in $X_\D$ if and only if the triple $A,B,C$ is flat.   Suppose   that the triple $ A,B,C $ is flat.
Then
 every vertex of
   ${\Gamma}$ is incident to a unique edge belonging to  at least  two   of the
sets $A,B,C$. Such edges form a perfect matching, ${K}$,   of~${\Gamma}$.
For any vertex $v$ of ${\Gamma}$,   either   $A_v=B_v=C_v ={K}_v $ or  $A_v=B_v ={K}_v\neq C_v $  up to a permutation of
   $A,B,C$. In the first case,
  the sets $A{K}, B{K},
   C{K}$ contain no edges incident to $v$. In the second case,    $A{K},
   B{K}
 $ contain no edges incident to $v$ while $C{K}$ contains two such edges~$C_v$ and~${K}_v$. This   shows that $A{K}, B{K}, C{K}$ are three pairwise
   independent cyclic sets of edges. They split
 (uniquely) as     unions of independent cycles. All these cycles are even:  their intersections with $K$ are their halves.    Denote the resulting sets of
 even cycles by $X, Y, Z$, respectively. Thus,   $A{K}=[X], B{K}=[Y], C{K}=[Z]$. Equivalently, $A=[X]{K}$, $ B= [Y] {K}$, and $ C=[Z]{K}$. Then the cube in $\D$ determined
 by the based cube $({K}, X \cup Y\cup Z)$ contains $\{A,B,C\}$.

 Conversely, suppose that $A,B,C \in \D$ are
   vertices of a cube in $\D$.  It is easy to see that    there is ${K}\in \D$ and
a   set of independent even  cycles $S $ with a partition $S=X \amalg Y \amalg Z$ such
that $A=[X]{K}$, $B=[Y] {K}$, and $C=[Z]{K}$.
If a   vertex $v$ of~${\Gamma}$ is not a vertex of the cycles in $ S$, then $A_v=B_v=C_v ={K}_v $. Pick any vertex   $v \in \partial s $ with $s\in S$. If $s\in   X$, then $s$ is not a vertex of the cycles in $Y$ and $Z$   and
  $ B_v =C_v =K_v$. The cases $s\in Y$ and $s\in Z$ are  similar. In all cases, at least two of the edges $A_v, B_v, C_v$ are equal. Therefore, the triple $ A,B,C $ is flat.
\end{proof}

\begin{corol}\label{cocolphi-+VBBB}     If~$\Gamma$ is a finite graph, then the dimer group of any   $A_0\in \D=\D(\Gamma)$   is   presented by the generators $\{x_{A,B}\}_{A,B\in \D}$ subject to the  relations $x_{A_0,A}=1$ for each  $A\in \D$ and
  $x_{A,C}=x_{A,B}\,  x_{B,C}$ for each  flat triple   $A,B,C \in \D$. \end{corol}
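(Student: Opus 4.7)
The plan is to deduce this corollary as an immediate specialization of Theorem~\ref{phi-+VBBB}: the key point is that when $\Gamma$ is finite, every pair of perfect matchings in~$\Gamma$ is automatically congruent, so the condition ``congruent to $A_0$'' appearing in Theorem~\ref{phi-+VBBB} becomes vacuous and can be dropped from both the generators and the relations.

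First I would verify the congruence claim. Since $\Gamma$ is finite, its edge set~$E$ is finite, so for any $A, B \in \D = \D(\Gamma)$ the symmetric difference $AB = (A \setminus B) \cup (B \setminus A)$ is a subset of~$E$ and therefore finite. By the definition of congruence at the end of Section~\ref{newDimer coverings}, this means that any two elements of~$\D$ are congruent; in particular every $A \in \D$ is congruent to the chosen basepoint~$A_0$. This is exactly the content of Lemma~\ref{conn} (via Lemma~\ref{conn---}), which also ensures that $X_\D$ is path-connected, so that $\pi_1(X_\D, A_0)$ is the dimer group $D(\Gamma)$ independent (up to canonical isomorphism) of the choice of~$A_0$.

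Next I would invoke Theorem~\ref{phi-+VBBB}, which presents the dimer group of~$A_0$ with generators $x_{A,B}$ indexed by ordered pairs of perfect matchings congruent to~$A_0$, subject to the relations $x_{A_0, A} = 1$ for each such~$A$ and $x_{A,C} = x_{A,B}\, x_{B,C}$ for each flat triple $A, B, C$ congruent to~$A_0$. By the previous paragraph, the predicate ``congruent to~$A_0$'' is simply ``belongs to~$\D$'' in the finite case, so the generating set becomes $\{x_{A,B}\}_{A, B \in \D}$ and the relations range over all $A \in \D$ and all flat triples $A, B, C \in \D$. This is precisely the presentation asserted by the corollary.

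There is no substantive obstacle to overcome: the entire content of the corollary is the observation that finiteness of~$E$ trivializes the congruence condition. No new constructions, estimates, or verifications beyond invoking $|E| < \infty$ are needed, and no additional relations appear because the flat-triple condition itself makes no reference to congruence.
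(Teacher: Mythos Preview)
Your proposal is correct and matches the paper's intended argument: the corollary is stated without proof precisely because it follows immediately from Theorem~\ref{phi-+VBBB} once one notes (as in Lemma~\ref{conn}) that all perfect matchings in a finite graph are congruent. There is nothing to add.
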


 \subsection{The fundamental groupoid}  Consider a straight CW-space~$X$  and   the fundamental groupoid $ \pi_1(X, Y)$   where $Y=X^{(0)} $ is the 0-skeleton of $X$.    This   groupoid     is a category whose objects are points  of $Y$ and whose morphisms are homotopy classes of paths in $X$ with endpoints in $Y$.  To  make   composition   compatible with multiplication of paths, we write   composition of morphisms in $ \pi_1(X, Y)$  in the  order opposite to the usual one.
  The  arguments   in the proof of  Lemma~\ref{phi-+} show that  the    groupoid $ \pi_1(X, Y)$
  can be presented by    generators $\{x_{A,B}:A\to B\}_{A,B\in Y}$ subject to the   relations
  $x_{A,C}=x_{A,B}\,  x_{B,C}$ for any    $A,B,C\in Y $ contained in a  cell  of $X$.  The image of   $
x_{A,B}$ in  $\pi_1(X, Y)$  is the morphism $\overline{AB}:A\to B$.
    As a consequence, in the setting of Section~\ref{newDimer coverings+}  the   groupoid $ \pi_1(X_\D, \D)$ is presented by the generators $\{x_{A,B}:A\to B\}_{A,B }$ where $(A,B)$ runs over all ordered pairs of  congruent perfect matchings in~$\Gamma$ and the relations
  $x_{A,C}=x_{A,B}\,  x_{B,C}$ for every  flat triple   $A,B,C  $ of  congruent perfect matchings in~$\Gamma$.

\section{Matching groups and their homomorphisms}\label{gencasem}

We define    matching groups and  study natural  homomorphisms   between them.

\subsection{The matching groups}\label{matchgr--}  Recall that for a  set $A$ of edges of  a graph~$\Gamma$, we      denote by $\partial A$  the set of all vertices of~$\Gamma$ adjacent to at least one  edge in~$A$.  Denote by $\Gamma_A$   the subgraph of~$\Gamma$ whose  vertices are the points of $\partial A$ and whose edges are the  edges of~$\Gamma$  with  both endpoints   in $\partial A$.  In particular, all edges of $\Gamma$ belonging to~$A$ are also  edges of~$\Gamma_A$. This gives a set of  edges of~$\Gamma_A$   denoted $A^p$.

A  matching~$A$  in~$\Gamma$  is a non-empty set of edges of~$\Gamma$  such that different edges in~$A$ have no common vertices. If $A$ is a   matching in~$\Gamma$, then~$A^p$  is a perfect matching in~$\Gamma_A$. The  \emph{matching group}   of~$\Gamma$ at~$A$ is defined to be  the dimer group
of   $\Gamma_A$ at~$A^p$:  $$\pi_A=\pi_A (\Gamma)= \pi_1(X_{\D} (\Gamma_A), A^p).$$
Computing this  group  from the 2-skeleton of $X_\D(\Gamma_A)$ in the standard way, we immediately see that this definition of~$\pi_A$ is equivalent to the one   in the introduction.  If~$A$ is a perfect matching in~$\Gamma$, then $\Gamma_A=\Gamma$ and $A^p=A$ so that the matching group and the dimer group of~$A$ coincide.

The congruence of perfect matchings defined in Section~\ref{newDimer coverings}   extends to arbitrary matchings in~$\Gamma$ as follows.
Two matchings $A,B$  are \emph{congruent} if  the set $AB=(A \setminus B) \cup (B \setminus A)$ is finite and $\partial A=\partial B$. These conditions  imply that $\Gamma_A =\Gamma_B$ and that the perfect matchings $A^p, B^p$ lie in the same connected component of the dimer complex $X_{\D} (\Gamma_A)=X_{\D} (\Gamma_B)$. Since this component  is   straight,     Section~\ref{mincu} yields  an isomorphism
${i}_{A,B}:\pi_A \to \pi_B$. We have
  ${i}_{A,A}=\id$ and ${i}_{B,A}={i}_{A,B}^{-1}$. If $A, B, C$ are congruent  matchings in~$\Gamma$, then
    the automorphism ${i}_{C,A} {i}_{B,C}{i}_{A, B}  $  of  $\pi_A $ is  the conjugation by   $\overline{ABCA} \in \pi_A $.

A matching $A$ in~$\Gamma$ \emph{finite} if
  the graph $\Gamma_A$ is finite  (this condition is stronger than   requiring $A$ to be a finite set of edges). The matching group of a  finite matching  shares all group properties listed  in Section~\ref{Cubed complexes} and
  Corollary~\ref{simpleNEW++-nilp}, cf.\ Section~\ref{thecaseoffinitegraphs}.

    \subsection{Canonical homomorphisms}\label{matchgr} Let $A, A'$ be matchings in a graph~$\Gamma$ such that $A' \subset A$. We construct   a   canonical homomorphism $j_{A', A}: \pi_{A'}\to \pi_A$ so that  $j_{A, A}=\id$ and $ j_{A', A} j_{A'', A'} =j_{A'', A}$ for any sets $A''\subset A'\subset A$.

    Replacing,  if necessary,~$\Gamma$ by $\Gamma_A$ we can assume that $A$ is a perfect matching.   Let:  $E$ be the set of edges of~$\Gamma$; $G=2^E$ be the power group of~$E$;   $\D $   be the set  of perfect matchings  in~$\Gamma$. Let $E'$, $G'$,  and $ \D'$ be  similar objects associated with the subgraph $\Gamma'= \Gamma_{A'}$   of~$\Gamma$.  Clearly,    $E'\subset E$,  $G'\subset G$, every even cycle in $\Gamma'$ is also an even cycle in $\Gamma$, and every cubic set of even cycles in $\Gamma'$ is also a cubic set of even cycles in $\Gamma$.
    We define a map $j:X_{\D'} \to X_\D$ as follows.  Set    $$C=A\setminus A'\subset E. $$  Any  point $b\in X_{\D'}$ is represented by a triple ($B\in \D'$,   a cubic set of
 glides   $S\subset G'$,   $x\in I^S$) such that $[T] B\in \D'$ for all  $T\subset S$. Since   $B\in\D'$, we have  $\partial B=\partial A'$,   $B\cap C=\emptyset$, and    $B\cup C\in \D$.  Let $j(b)  $ be the point of $X_G$ represented by the  triple ($B\cup C $,
   $S $,  $x$). It is easy to check that  $j(b)$ does not depend on the choice of $(B, S, x)$. Moreover, $j(b)\in X_\D\subset X_G$. Indeed, for $T\subset S$, we have $[T] B\in \D'$ and therefore $[T] (B\cup C)=[T] B\cup C\in \D$.  The resulting map $j:X_{\D'}\to X_{\D}$ is  continues and   $j(A')=A $. The induced map in $\pi_1$ is  the homomorphism $j_{A', A}: \pi_{A'}\to \pi_A$.

 The  homomorphisms $\{j_{A', A}\}$ are compatible  with the   homomorphisms $\{i_{A, B}\}$ of Section~\ref{matchgr--} as follows: if $A'\subset A$ and $B'\subset B$ are four matchings in~$\Gamma$ such that $A$ is congruent to $B$ and $A'$ is congruent to $B'$, then the following diagram commutes:
$$
    \xymatrix@R=1cm @C=3cm { \pi_{A'}  \ar[r]^-{i_{A', B'}}  \ar[d]_-{j_{A', A}}
    & \pi_{B'}\ar[d]^-{j_{B', B}} \\
     \pi_A \ar[r]^-{ i_{A, B}}
    & \pi_B\, .
    }
$$

If the graph $\Gamma$ is finite, then the homomorphism $j_{A', A}$ is injective for any matchings $A'\subset A$ in~$\Gamma$. This follows from Lemma~\ref{Wise} and the fact that $j:X'\to X$ is a local isometry; we leave the details to the reader.

\section{Matchings vs. braids}\label{extension-----}

The braid groups of     graphs   share many properties of the matching groups, see \cite{Ab}, \cite{CW}. We   construct      natural     homomorphisms  from the matching groups to the braid groups.



\subsection{The braid groups}\label{confbraid} For a topological space $P$ and an  integer $n\geq 1$, the {\it ordered $n$-configuration space}  $\tilde C_n=\tilde C_n(P) \subset  P^n$ consists of all  $n$-tuples of pairwise distinct points of $P$. The symmetric group $S_n$ acts on $\tilde C_n $ by permutations of the tuples, and the quotient  $C_n =C_n(P)=\tilde C_n /S_n$ is the {\it unordered $n$-configuration space} of~$P$.  For   $c\in C_n  $,  the fundamental group $\pi_1(C_n, c)$ is denoted $B_n(P,c)$ and called the {\it $n$-braid group} of $P$ at $c$.  The covering $\tilde C_n \to  C_n $
determines (up to conjugation) a {\it permutation homomorphism} $\sigma_n: B_n(P,c)\to S_n$.
Given a subspace~$Q$  of~$P$,  the obvious  inclusion $C_n(Q)\subset C_n(P)$  induces {\it the inclusion homomorphism}   $B_n(Q,c) \to B_n(P,c)$ for all $n \geq 1$ and $c\in C_n(Q)$.

The connection between  configuration spaces and  matchings  stems from the following observation. Given a finite matching~$A$ in a graph~$\Gamma$, one can consider the set $\widehat A$ consisting of the mid-points of the edges in~$A$. This set    is a point in the unordered $N$-configuration space $C_N(\Gamma)$  where $N=\card(A)\geq 1$. To define $\widehat A$, we   tacitly assume   that    all   edges of~$\Gamma$ are
  parametrized by   $I=[0,1]$.   This     allows   us to  consider  the mid-points of the edges and, more generally, the convex combinations of  points of
   the edges.   The homomorphisms  from the matching groups to the braid groups defined below   do not depend on the  choice of parametrizations   up to composition with transfer isomorphisms of braid groups along paths in $C_N(\Gamma)$.

\subsection{V-orientations}\label{V-orientations of cycles}  We introduce a notion of a v-orientation of a graph~$\Gamma$ needed in our constructions. Given an even cycle $s $ in~$\Gamma$,  the set of its vertices $\partial s$ has a unique partition into two   subsets  called the {\it v-halves}  such that the vertices of any edge in~$s$  belong to different v-halves.
 A {\it v-orientation} of~$\Gamma$ is a choice of a distinguished v-half in each even cycle in $\Gamma$.


\subsection{The  map $\Theta $}\label{amapcgcgcg} Let $\Gamma$ be a v-oriented finite graph  admitting a  perfect matching. Then $\Gamma$ has $2N$ vertices for some $N\geq 1$ and
  all perfect matchings in~$\Gamma$ consist of~$N$ edges.   The formula $A \mapsto \widehat A$ defines a map from  the set $\D$ of perfect matchings in~$\Gamma$  to   $ C_N(\Gamma)$. We    extend this map to   a continuous map~$\Theta$ from the dimer space $X_\D=X_\D(\Gamma)$ to $ C_N(\Gamma)$.     The extension is based on the following idea: when a perfect matching $A$ is glided along an even cycle $s$, the   points of
  $\widehat A$ not lying on the circle~$\underline s$ do not move while the   points of
  $\widehat A \cap \underline s$ move  to the points of $\widehat {sA}\cap \underline  s$ along~$\underline s$ across the distinguished vertices of~$s$. Here is a   precise definition. A point $a\in X_\D$ is
represented by a triple ($A\in \D$, a finite set $S$ of independent even cycles in~$\Gamma$,    $x\in I^S$) such that $[T]A\in \D$ for all   $T\subset S$.
Replacing, if necessary,~$A$ with $tA$ for some $t\in S$ we can   assume that $x(S)\subset [0,1/2]$. Each edge $e \in A$  not  lying on   $\cup_{s\in S} s$ contributes its mid-point to $\Theta (a)$.  Each edge $e \in A$   lying on a cycle   $s\in S$ has an endpoint,~$a_e $, belonging to  the distinguished  v-half of~$s$ and an endpoint, $b_e $, belonging to  the complementary  v-half of~$s$. The edge~$e$ then contributes to~$\Theta (a)$  the point
$$  (1/2+ x(s)) a_e+ (1/2- x(s)) b_e \in e.$$
The coefficients
are chosen so that  for $x(s)=0$   we get the mid-point of~$e$, and  for $x(s)=1/2$ we get   $a_e$.  When $e$ runs over   $A$, we obtain
an $N$-point set $\Theta (a) \subset \Gamma$.
This gives a continuous map $\Theta: X_\D\to C_N(\Gamma)$ extending the map $\D \to C_N(\Gamma), A\mapsto \widehat A$.

\subsection{The homomorphism $\theta_A$}\label{amghohmommmngcg} Let $A$ be a finite matching in a   v-oriented graph~$\Gamma$ (finite or  infinite).
   The subgraph $\Gamma_A $ of $\Gamma$ defined in Section~\ref{matchgr--}  is finite and has a perfect matching~$A^p$. The v-orientation in $\Gamma$ restricts to a v-orientation in~$\Gamma_A$.  Now, Section~\ref{amapcgcgcg} yields a continuous map   $\Theta: X_{\D}( \Gamma_A) \to C_N( \Gamma_A) $
   where $N=\card(A)\geq 1$.  Let    $\theta_A :  \pi_A \to B_N(\Gamma, \widehat A)$  be the composition   $$ \pi_A=  \pi_A(\Gamma)= \pi_1( X_{\D}(\Gamma_A) , A^p) \stackrel{\Theta_{\ast}}{ \xrightarrow{\hspace*{0.4cm}} }  \pi_1(C_N( \Gamma_A) , \widehat {A^p}) =B_N(\Gamma_A, \widehat {A^p}) \to B_N(\Gamma, \widehat A) $$
   where the right  arrow is the inclusion homomorphism.    The homomorphism~$\theta_A$  is   not necessarily injective  and  may be trivial, see Example~\ref{EEEDDD+}  below.
The composition of    $\theta_A$ with   the permutation homomorphism $\sigma_N: B_N(\Gamma, \widehat A)\to S_N$ can be   computed as follows.   Mark  the edges in  $A $ by the numbers $1,2,..., N$. Any    $ \alpha \in  \pi_A$  is represented  by a sequence of consecutive glidings of $A$ along  certain  even cycles $s_1,..., s_n$. Recursively in  $i=1,..., n$, we accompany the $i$-th  gliding  with the transformation  of the marked  matching which keeps the marked edges   not belonging to~$s_i$ and  replaces each marked edge  in $s_i$ with the adjacent edge in $s_i$ sharing the vertex in the distinguished v-half of $s_i$ and having the same mark. After the $n$-th gliding, we obtain  the   matching~$A$ with a new marking. The resulting permutation of the set $\{1,..., N\}$  is  equal to  $\sigma_N \theta_A (\alpha)$.        Generally speaking,    $\sigma_N \theta_A $ depends on the   v-orientation of~$\Gamma$, see Example~\ref{EEEDDD} below. If~$A$ and~$B$ are      congruent matchings  in~$\Gamma$, then   the   canonical isomorphisms of   matching groups and braid groups at~$A$ and~$B$ conjugate $\theta_A$ and $\theta_B$.

Using   $\theta_A$ one can construct braids   from  matchings in~$\Gamma$:  any  tuple of matchings $A_1,..., A_n$  congruent to $A$
gives rise to the braid $\theta_A(\overline{AA_1\cdots A_nA})\in B_N(\Gamma, \widehat A)$.

\subsection{Example}\label{EEEDDD}  Consider the   graph $\Gamma$  in Figure~\ref{fig1} with   vertices $a,b,c,d,e,f$. This graph has three cycles $s_1$, $s_2$, $s_{12}$ formed, respectively,  by the edges of the left square,   the edges of the right square,   all edges except the middle vertical edge. These cycles are even. We distinguish the  $v$-halves, respectively, $\{a , e\}$, $\{  c,e\}$, $\{b,d,f\}$. The vertical edges of $\Gamma$ form a perfect matching  $A$. The matching group $\pi_A=\pi_1(X_\D (\Gamma), A)$ is an infinite cyclic group with generator $t$ represented by the sequence of glidings $A\mapsto s_1 A \mapsto s_{12} s_1 A \mapsto s_2s_{12} s_1 A$. To compute  $\sigma_3 \theta_A :\pi_A \to S_3$, we  mark the edges in $A$ with   $1$, $2$, $3$ from left to right. The transformations  of~$A$ under the  glidings  are shown in Figure~\ref{fig1}. Therefore $\sigma_3 \theta_A(t)=(231)$.   The opposite choice of the distinguished v-half in  $s_2$ gives $\sigma_3 \theta_A(t)=(213)$.

 \begin{figure}[h,t]
  \includegraphics[width=9cm,height=5.4cm]{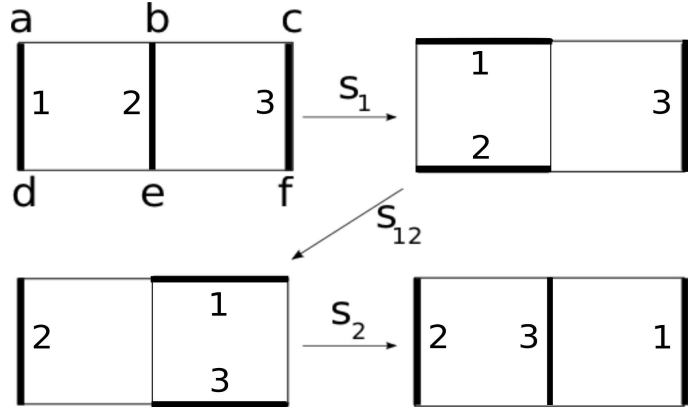}
  \caption{Transformations of a marked perfect matching}\label{fig1}
  \end{figure}

\subsection{Example}\label{EEEDDD+} Consider a     finite  bipartite graph $\Gamma$ (the word \lq\lq bipartite" means  that   the set of  vertices   is partitioned into two subsets $V_0, V_1$ such that every edge has one vertex in each). All cycles in $\Gamma$ are even. We v-orient $\Gamma$   by selecting in every   cycle  the v-half formed by the vertices in $V_0$. For any  matching  $A$ in~$\Gamma$,   we have $\Theta(X_\D(\Gamma_A) )  \subset \prod_{v\in V_0 \cap \partial A} \Delta_v$ where  $\Delta_v $ is the union of all half-edges of $\Gamma_A$ adjacent to~$v$. Since $\Delta_v$ is contractible, the map $\Theta$ is homotopic to a constant map. Hence  $\theta_A =1 $.
Other   v-orientations in $\Gamma$ may give non-trivial~$\theta_A$, cf. Example~\ref{EEEDDD}.

\subsection{Generalization of $\theta_A$}  Let $A$ be a finite matching in a v-oriented graph $\Gamma$ and $N=\card(A)$. The homomorphism $\theta_A :  \pi_A \to B_N(\Gamma, \widehat A)$ can be included into a vast family of similar  homomorphisms  as follows.  Let $F$ be the set of edges of the (finite) graph $\Gamma_A$, i.e., the set of edges of $\Gamma$ with both endpoints in $\partial A$. Pick a  map
  $n: F\to \{0,1,2,...\}$ and set $\vert n\vert =\sum_{e\in F} n(e)$.   Let $\Gamma_A^n$ be the graph obtained from $\Gamma_A$ by adding $2 n(e)$ new vertices inside   each  edge $e  $.
  The  canonical  homeomorphism   $X_\D({\Gamma_A}) \approx  X_\D({\Gamma_A^n})$   (cf. Remark~\ref{rexexexexsere}.1)      induces an isomorphism $\pi_A (\Gamma)  \simeq \pi_{{A_n}}(\Gamma_A^n)$ where ${A_n}$ is the perfect matching in $\Gamma_A^n$ induced by $A^p \in \D(\Gamma_A)$.  The v-orientation of $\Gamma$ induces a v-orientation of $\Gamma^n_A$: the distinguished v-half of a cycle in $\Gamma^n_A$ is the one including   the distinguished v-half of the corresponding cycle in $\Gamma$.   We define a homomorphism $\theta_A^n: \pi_A (\Gamma) \to B_{N+\vert n\vert}(\Gamma, \widehat {A_n})$ as the composition
$$   \pi_A (\Gamma)\simeq \pi_{{A_n}}(\Gamma_A^n) \stackrel{\theta_{{A_n}}}{ \xrightarrow{\hspace*{1cm}} } B_{N+\vert n\vert}(\Gamma_A^n, \widehat {A_n}) \to B_{N+\vert n\vert}(\Gamma,  \widehat {A_n}) $$
where the right arrow is the inclusion homomorphism.
For $n=0$, we have  $\theta_A^n=\theta_A$. In
Example~\ref{EEEDDD+}  the graph $\Gamma^n$ is   bipartite and  so $\theta_A^n=1$   for all $n$.

Composing $\theta^n_A$ with   $\sigma_{N+\vert n\vert} $  we obtain a  family of homomorphisms from the matching groups to the symmetric groups.   Is it true that $\cap\, \cap_n  \Ker (\sigma_{N+\vert n\vert} \theta_A^n )=1$ where the first intersection runs over all   v-orientations of the graph?

  \section{The  space of dimer labelings}\label{lloccgccgcgl}

We interpret the dimer complex of a  finite graph in terms of so-called dimer labelings. This determines a canonical embedding of the dimer complex into a  cube.   We first define   a certain evaluation map   in a more general setting.

\subsection{The evaluation map}\label{orisetthe2}
  Assume   that the power group $G=2^E$ of a set~$E$ is equipped with  a   gliding
system  such that any two independent glides   are disjoint as subsets of~$E$. Consider the associated cubed complex $X_G$. Assigning to each   set $A \subset E$   its
characteristic function $\delta_A: E\to \{0,1\}\subset I$ we obtain
a map  from the 0-skeleton $X^{(0)}_G= G$ of $X_G$ to  $I^E$. This map extends to a map~$\omega: X_G \to I^E$  by
$$\omega  (a)  = \delta_A   + (1-2\delta_A   )  \sum_{s\in S}\,  x(s)\, \delta_{s} :E\to I $$
for any   point $a\in X_G$
represented by a triple ($A\in G$, a cubic set of glides $S \subset G$,  a  map $x:S\to I$).
By the assumption on the gliding system, different elements of~$S$ are
disjoint as subsets of~$E$. Therefore, for any $e\in E$,
\begin{equation}\label{func}
\omega (a) (e) = \left\{
    \begin{array}{ll}
    \delta_A(e) & \mbox{if} \,\, e \in E \setminus  [S]=E \setminus   \cup_{s\in S} s \\
        x(s) & \mbox{if} \,\,   e\in s \setminus A  \,\,  \mbox{with}  \,\,   s\in S\\
        1-x(s) & \mbox{if} \,\,   e\in s \cap A   \,\,  \mbox{with}  \,\,   s\in S
         \, .
    \end{array}
\right.
\end{equation}
 This formula implies that the map $\omega:X_G\to I^E$ is well-defined, continuous, and its restriction    to
any cube in $X_G$ is injective. We call $\omega$ the {\it evaluation map}.


\begin{lemma}\label{Wisecubi}    Suppose that the   gliding system in~$G$ is such that for  any   different cubic sets of glides $S_1,S_2\subset G$, we   have $[S_1]\neq [S_2]  $.  Suppose that   a set $\E \subset G $ satisfies the following condition:
    any glide $s\subset E$ has a partition into two non-empty subsets  such that if $A \in \E$ and $sA \in \E$,
 then $s\cap A$ is one of these subsets.
  Then the restriction of $\omega :X_G\to I^E$  to $X_\E\subset X_G$ is   injective.
   \end{lemma}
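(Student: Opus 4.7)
The plan is to show that, given $a, a' \in X_\E$ with $\omega(a) = \omega(a')$, one can reconstruct the same data (up to the equivalence of based cubes defining $X_G$) from the common value $f = \omega(a) = \omega(a') \in I^E$. First, using the equivalence moves of Section~\ref{cubecomplexX} (dropping a glide $s$ from $S$ if $x(s) = 0$, and applying the basepoint-change with $T = \{s\}$ if $x(s) = 1$ to turn $x(s)$ into $0$ and then drop it), I would normalize so that $a$ is represented by some $(A, S, x)$ and $a'$ by some $(A', S', x')$ with all coordinates $x(s), x'(s')$ lying in $(0,1)$. Such reduced representatives describe $a, a'$ as interior points of the smallest cubes of $X_G$ containing them, and as $X_\E$ is a subcomplex, these cubes lie entirely in $X_\E$, so $[T]A \in \E$ for all $T \subset S$ and similarly for $A'$.

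From formula \eqref{func} applied to the reduced representatives, both $[S]$ and $[S']$ coincide with $\{e \in E : 0 < f(e) < 1\}$, so $[S] = [S']$. The first hypothesis of the lemma, that distinct cubic sets of glides have distinct products, then forces $S = S'$. For each $s \in S$ the inclusions $A, sA, A', sA' \in \E$ just noted permit the second hypothesis to be invoked: both $A \cap s$ and $A' \cap s$ lie in the distinguished two-element set $\{s_0, s_1\}$ given by the partition of $s$. Reading \eqref{func} on the subset $s \subset E$, the pair of values taken by $f$ on $s_0$ and $s_1$ equals $\{x(s), 1-x(s)\}$, so that $f$ together with the partition pins down $(A \cap s, x(s))$ up to the involution $(A \cap s, x(s)) \leftrightarrow (s \setminus (A \cap s), 1-x(s))$; the same statement applies to $(A' \cap s, x'(s))$.

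Setting $T = \{s \in S : A \cap s \neq A' \cap s\} \subset S$, one then obtains $x'(s) = 1 - x(s)$ on $T$ and $x'(s) = x(s)$ on $S \setminus T$, while outside $[S]$ the formula yields $\delta_A = f|_{E \setminus [S]} = \delta_{A'}$. Combining these facts inside the power group $G = 2^E$ gives $A' = [T]A$, and since $s^{-1} = s$ in $G$ one has $S_T = S = S'$; thus the basepoint-change move with parameter $T$ carries $(A, S, x)$ to $(A', S', x')$, so $a = a'$ in $X_\E$. The main point to keep track of will be matching the intrinsic ambiguity in representing a single point of $X_G$ by the $2^{\card(S)}$ based cubes associated with it against the ambiguity that the two hypotheses leave in the reconstruction: hypothesis (1) removes the ambiguity in recovering $S$ from $[S]$, and hypothesis (2) restricts the possibilities for $A \cap s$ to exactly the two halves $s_0, s_1$, matching precisely the two choices available in a basepoint change across the glide $s$.
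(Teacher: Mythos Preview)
Your proof is correct and follows essentially the same route as the paper: normalize so that $x(s)\in(0,1)$, use the first hypothesis to get $S=S'$ from $[S]=[S']=f^{-1}((0,1))$, then for each $s\in S$ establish the dichotomy $(A\cap s,x(s))=(A'\cap s,x'(s))$ or $(s\setminus(A'\cap s),1-x'(s))$, and absorb the second alternative into a basepoint-change with the appropriate $T\subset S$. The only organizational difference is that you invoke the partition hypothesis uniformly for every $s\in S$ (so that $A\cap s,A'\cap s\in\{s_0,s_1\}$ from the outset), whereas the paper first handles the case where $f$ takes two distinct values on $s$ using only the level sets of $f$, and reserves the partition hypothesis for the degenerate case $f\vert_s\equiv 1/2$; your treatment is in fact slightly more streamlined.
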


   \begin{proof} Assume that
   $\omega (a_1)=\omega (a_2)$ for  some $a_1, a_2\in X_\E$. We  show that $a_1=a_2$. Let us represent each $a_i$ by
  a triple  $(A_i\in \E,S_i,x_i:S_i\to I)$ as above.  Passing, if
   necessary, to  a face of the   cube, we can assume that
   $0<x_i(s)<1$ for all $s\in S_i$.  Set $f_i=\omega(a_i):E\to I$. It follows from   \eqref{func}  that    $  f_i^{-1}( (0,1))=\cup_{s\in S_i} s=[S_i]
   $. The
   equality $f_1 =f_2 $ implies that
   $[S_1]=[S_2]$. By the assumptions of the lemma, $S_1$=$S_2$. Set $S=S_1=S_2$.
We prove below that for
    any  $s \in S$, either

    $(\ast  )$ $s\cap A_1=s\cap A_2$ and $x_1(s)=x_2(s)$ or

    $(\ast \ast)$ $s\cap A_1=s\setminus A_2 $ and
    $x_1(s)+x_2(s)=1$.

    \noindent For  an  $s$ of the second type,    replace $(A_2, S, x_2)$ with    $(A'_2=sA_2, S, x'_2)$  where $x'_2:S\to
   I$ carries $s$ to
   $1-x(s )$ and  is equal to $x_2$ on $S\setminus \{s\}$. The triple $(A'_2, S, x'_2)$ represents the same point
   $a_2 \in X_\E$ and satisfies    $s\cap A_1=s\cap A'_2 $ and
   $x_1(s)=x'_2(s)$. Since different $s\in S$ are  disjoint as
   subsets of $E$, such replacements along different~$s $ of the second type do not interfere with  each other
   and commute. Proceeding in this way, we
 obtain a new triple $(A_2, S, x_2)$ representing
   $a_2 $ and satisfying  $(\ast  )$ for all $s\in S$. Then $x_1=x_2:S\to I$. Also,
   $$A_1\cup \cup_{s\in S} s= f_1^{-1}((0,1]) = f_2^{-1}((0,1]) =A_2\cup \cup_{s\in S} s.$$
   Since different $s\in S$ are disjoint as subsets of $E$ and satisfy $s\cap A_1=s\cap A_2$, we deduce that
    $A_1=A_2$. Thus, $a_1=a_2$.


It remains to prove $(\ast  )$   or $(\ast \ast)$ for
    each   $s \in S$. Set $f=f_1=f_2:E\to I$.
    By \eqref{func},  the
   map $f $   takes at most two  values on
   $s\subset E$.
   Suppose first that $f\vert_s$   takes two distinct values  (whose sum is   equal to $1$).
   By
   \eqref{func},  $f$ is constant on both   $s\cap A_i$   and  $s\setminus A_i$ for   $i=1,2$. If
 $f(s\cap A_1)=f(s\cap A_2)$,  then we have  $(\ast)$. If
 $f(s\cap A_1)= f(s\setminus A_2)$,  then we have  $(\ast\ast)$.   Suppose
   that $f$ takes only one value on $s$. The assumption on the set~$\E$ implies that $s\cap A_1\neq \emptyset$ and $s\setminus A_1\neq \emptyset$.
   Since $f=\omega(a_1)$ takes only one value on $s$, formula \eqref{func} implies that this  value    is  $1/2$.
   Then
$x_1(s)=x_2(s)=1/2$. The assumption on~$\E$ implies  that either $s\cap
A_1=s\cap A_2$ or $s\cap A_1=s\setminus A_2 $. This gives,
respectively, $(\ast)$ or $(\ast \ast)$.
  \end{proof}


 \subsection{Dimer labelings}\label{spaceLgamma} Consider  a finite graph $\Gamma$ with the set of edges $E$.     A {\it dimer labeling} of   ${\Gamma}$ is a
labeling of the edges of   ${\Gamma}$  by non-negative real numbers such
that   for every vertex of   ${\Gamma}$,   only one or two of  the   labels of the adjacent
edges are non-zero and their sum is equal to  $1$.
The set $L= L({\Gamma})$ of dimer labelings  of~${\Gamma}$ is a closed subset of  the cube $I^E$  and is endowed
  with the induced  topology.  The characteristic function of a  perfect matching in~$\Gamma$  is a dimer labeling.   In this way, we  identify the set of perfect matchings $\D= \D(\Gamma)$ of~$\Gamma$  with  a subset of~$L $.


  \begin{theor}\label{spaces++1}

  (i) The set $\D \subset L $ lies  in a path-connected component,  $L_0 $, of   $ L $.

  (ii) Let  $\omega:X_G\to I^E$ be the evaluation map from Section~\ref{orisetthe2} where $G=2^E$ carries the even-cycle gliding system.  The restriction of  $\omega $
   to  the dimer complex $X_\D \subset X_G$   is a homeomorphism of $X_\D $ onto $L_0 $.

   (iii) All  other  components of      $L $ are  homeomorphic to the dimer  complexes of  certain subgraphs of $\Gamma$.
 \end{theor}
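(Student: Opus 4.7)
The plan is to prove (i) directly from formula~\eqref{func} combined with Lemma~\ref{conn}, to prove (ii) by combining Lemma~\ref{Wisecubi} with a structural analysis of dimer labelings, and to prove (iii) by stripping off a rigid odd-cycle piece from any labeling outside $L_0$ and reducing to a subgraph. First I would verify $\omega(a)\in L$ for every $a\in X_{\D}$. Representing $a$ by a triple $(A,S,x)$ with $A\in\D$ and $[T]A\in\D$ for all $T\subset S$, I apply~\eqref{func} at a vertex $v$ of $\Gamma$: either $v$ lies on no cycle of $S$, so that the unique edge of $A$ at $v$ receives label $1$ and every other incident edge receives label $0$; or $v$ lies on a (necessarily unique) cycle $s\in S$, so that the two edges of $s$ at $v$ receive labels $1-x(s)$ and $x(s)$, which sum to $1$. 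Hence $\omega(a)\in L$, and since $X_{\D}$ is path-connected by Lemma~\ref{conn}, the image $\omega(X_{\D})$ lies in a single path-component $L_0$ of $L$, proving~(i).

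Injectivity of $\omega|_{X_{\D}}$ follows from Lemma~\ref{Wisecubi}: its first hypothesis is regularity of the even-cycle gliding system (Lemma~\ref{glidi}), and its second is the fact that every even cycle $s$ has the canonical partition into its two halves, one of which equals $s\cap A$ whenever $A,sA\in\D$. The substantial content of~(ii) is surjectivity of $\omega|_{X_{\D}}$ onto $L_0$, for which I would analyze a general dimer labeling $f\in L$ by setting $A_f=f^{-1}(1)$ and $B_f=f^{-1}((0,1))$. The dimer constraint forces $\partial A_f\sqcup\partial B_f=V(\Gamma)$, makes $A_f$ a matching, and makes $B_f$ a cyclic set of edges with a unique decomposition into independent cycles; on each cycle the labels alternate between two values $\alpha$ and $1-\alpha$, and closing up around the cycle forces $\alpha=1/2$ precisely when the cycle has odd length.

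Let $O(f)\subset E$ denote the union of the odd cycles in $B_f$. I claim that $O\colon L\to 2^E$ is locally constant: if $s\subset O(f)$, then on a neighborhood of $f$ the labels on $s$ stay close to $1/2$, hence in $(0,1)$, so $s\subset B_{f'}$ and the odd parity forces all its labels to equal $1/2$ exactly; conversely any odd cycle in $B_{f'}$ for $f'$ close to $f$ consists of edges with $f$-labels close to $1/2$, hence in $B_f$, giving an odd cycle of $B_f$. Since $B_A=\emptyset$ for $A\in\D$, $O\equiv\emptyset$ on $L_0$. Thus for $f\in L_0$ the cycles $s_1,\dots,s_n$ of $B_f$ are all even and have pairwise disjoint vertex sets, so $S=\{s_1,\dots,s_n\}$ is cubic; choosing one half of each $s_i$ to include in $A$, setting $A=A_f\cup\bigcup_i(A\cap s_i)$ and taking $x(s_i)$ equal to the common value of $f$ on the complementary half, a direct computation using~\eqref{func} gives $A\in\D$, $[T]A\in\D$ for every $T\subset S$, and $\omega(A,S,x)=f$. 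Because $\Gamma$ is finite, $X_{\D}$ is a compact CW-space and $L_0\subset I^E$ is Hausdorff, so the continuous bijection $\omega|_{X_{\D}}\colon X_{\D}\to L_0$ is a homeomorphism. For~(iii), let $L'$ be any component of $L$ other than $L_0$, set $O_0=O(f)$ for some (equivalently every) $f\in L'$, and let $\Gamma'$ be the subgraph of $\Gamma$ induced on $V(\Gamma)\setminus\partial O_0$. Restriction $\Phi\colon L'\to L(\Gamma')$, $f\mapsto f|_{E(\Gamma')}$, is well defined (labels at vertices of $\partial O_0$ are forced: $1/2$ on $O_0$ and $0$ on every other adjacent edge) and lands in $L_0(\Gamma')$; its inverse extends $g\in L_0(\Gamma')$ by $1/2$ on $O_0$ and $0$ on the remaining edges of $\Gamma$, with paths in $L_0(\Gamma')$ lifting canonically to paths in $L'$. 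Compactness and Hausdorffness make $\Phi$ a homeomorphism, and combining with (ii) applied to $\Gamma'$ gives $L'\approx X_{\D}(\Gamma')$.

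The principal obstacle I expect is establishing the local constancy of the odd-cycle invariant $O$: controlling how odd cycles of $B_f$ appear, disappear, or reroute under small perturbations in $L$ is the combinatorial-topological core on which both the component structure of $L$ in~(iii) and the surjectivity of $\omega|_{X_{\D}}$ onto $L_0$ in~(ii) depend. Everything else reduces to careful bookkeeping with~\eqref{func}, the dimer constraint at individual vertices, and the standard continuous-bijection-from-compact-to-Hausdorff argument.
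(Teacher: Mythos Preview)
Your argument is correct and follows essentially the same route as the paper: injectivity via Lemma~\ref{Wisecubi}, surjectivity by showing that any $f\in L_0$ has only even cycles in $f^{-1}((0,1))$ and then reconstructing a point of $X_{\D}$, and (iii) by peeling off the rigid odd-cycle part and passing to the induced subgraph. Your formalization through the locally constant invariant $O$ is a clean way to package what the paper states more informally as ``any deformation of $\ell$ in $L(\Gamma)$ must preserve $\ell(s_i)$''; the key observation making both directions of local constancy work is that a dimer labeling admits at most two positive edges at each vertex, so no third edge can sneak into $B_{f'}$ at a vertex of an odd cycle.

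One correction: the first hypothesis of Lemma~\ref{Wisecubi} is \emph{not} regularity. Regularity says that distinct \emph{subsets of a common pre-cubic set} have distinct products, whereas Lemma~\ref{Wisecubi} requires that distinct cubic sets $S_1,S_2$ (with no containment assumed) satisfy $[S_1]\neq[S_2]$. For the even-cycle system this holds because a cyclic set of edges decomposes into independent cycles in a \emph{unique} way, so $[S_1]=[S_2]$ forces $S_1=S_2$; that is the justification the paper gives, and it is logically independent of regularity. Also, your definition ``$A=A_f\cup\bigcup_i(A\cap s_i)$'' is circular as written; you mean to choose a half $s_i'$ of each $s_i$ and set $A=A_f\cup\bigcup_i s_i'$.
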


 \begin{proof} It follows from the definitions that $\omega(X_\D)  \subset L $ and that the restriction of~$\omega$ to   $  \D \subset X_\D $ is the identity map $\id:\D\to \D$.
We   apply Lemma \ref{Wisecubi} to $ \D\subset G$.
  The condition on the gliding system holds because a  subset of $E$ cannot
split as a   union of independent cycles in two different ways. The partition of  even cycles into halves satisfies the condition on $ \D$.  By Lemma \ref{Wisecubi},  the restriction of~$\omega $ to $X_\D$ is injective.  Since $X_\D$ is path connected, $\omega (X_\D) $ is contained in a path connected component, $L_0$,  of $L(\Gamma) $. By the above,   $\D \subset   \omega (X_\D) \subset L_0$. This proves (i).

We claim that $\omega(X_\D)=L_0$.
Indeed, pick any    dimer labeling $\ell:E\to I $ in $L_0$ and prove that $\ell \in \omega(X_\D)$. It is clear that the set $\ell^{-1}((0,1))\subset E$  is  cyclic. It splits uniquely as a  disjoint  union of  $n \geq 0$ independent  cycles   $s_1,..., s_n$. If $s_i$ is odd for some $i$, then   $\ell \vert_{s_i}= 1/2 $: otherwise, $s_i$ could be partitioned into edges with $\ell<1/2$ and edges with $\ell>1/2$ and would be even. Then, any  deformation of $\ell$ in $L({\Gamma})$   must preserve     $\ell(s_i)$. This contradicts the assumption that $\ell $ can be connected by a path  in~$L$ to an element of $\D$.
Therefore  the cycles $s_1,..., s_n$ are even.  For each $i=1,...,n$, pick a half, $s'_i$, of~$s_i$. The definition of a dimer labeling implies that $\ell$ takes the same value, $x_i\in (0,1)$,  on all edges belonging to $s'_i$ and the value $1-x_i$ on all   edges in  $s_i \setminus s'_i$.    Clearly,  the edges belonging to the set $A=\ell^{-1}(\{1\}) \cup \cup_{i=1}^n s'_i\subset E $  have no common vertices.  Since each vertex of ${\Gamma}$ is incident to an edge with positive label, it is incident to an edge belonging to $A$. Therefore  $A\in \D$.
Since $s_1,..., s_n$ are independent even cycles and $A\cap s_i= s'_i$ for each $i$, all    vertices of the  based cube $(A, S= \{s_1,..., s_n\})$ belong to~${\D}$.  The triple $(A, S, x:S\to I)$, where $x(s_i)=1-x_i$ for all $i$, represents a point $a\in X_\D$ such that  $\omega(a)=\ell$. So,  $\ell \in \omega(X_\D)$.

We conclude  that the map $\omega \vert _{X_\D}: X_\D \to L_0$    is a continuous bijection. Since  $X_\D$ is compact and $L_0  $ is Hausdorff, this map   is a homeomorphism. This proves (ii).

    Arbitrary components of the space $L({\Gamma})$ can be described as follows. Consider a  set ${C}$ of independent odd cycles in~${\Gamma}$. Deleting from~${\Gamma}$ the vertices of   these cycles and   all the edges   incident to these vertices, we obtain a subgraph~${\Gamma}^C$ of~${\Gamma}$. Each   dimer labeling of ${\Gamma}^C$ extends to a dimer labeling of ${\Gamma}$
  assigning $1/2$ to the edges belonging to the cycles in ${C}$ and $0$ to all other edges of~${\Gamma}$ not lying in~${\Gamma}^C$. This defines an embedding $i_C :L({\Gamma}^C)\hookrightarrow L({\Gamma})$.  The   arguments above show  that the image of $i_C  $ is a  union of connected components of $L({\Gamma})$. In particular, $i_C (L_0({\Gamma}^C))$ is a component of $L(\Gamma)$. Moreover,    every component of $L({\Gamma})$ is realized as $i_C (L_0({\Gamma}^C))$ for a unique~${C}$. (In particular,   $L_0({\Gamma})$   corresponds to ${C}=\emptyset$.) It remains to note that $ L_0({\Gamma}^C) $ is homeomorphic to the dimer complex of~$\Gamma^C$.
\end{proof}

\begin{corol}\label{dimlabelB++}   All  connected   components of      $L({\Gamma})$ are aspherical and are homeomorphic to non-positively curved cubed complexes.  \end{corol}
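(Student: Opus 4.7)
The plan is to string together three ingredients already established in the paper: the identification of the components of $L(\Gamma)$ with dimer complexes of subgraphs (Theorem~\ref{spaces++1}), the fact that dimer complexes are nonpositively curved cubed complexes (Theorem~\ref{sspeccaseW}), and Gromov's theorem on asphericity of nonpositively curved cubed complexes recalled in Section~\ref{Cubed complexes}.

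First, I would handle the component $L_0\subset L({\Gamma})$ containing the set of perfect matchings $\D$. By part~(ii) of Theorem~\ref{spaces++1}, the evaluation map $\omega$ induces a homeomorphism $X_\D(\Gamma) \approx L_0$. By Theorem~\ref{sspeccaseW}, the dimer complex $X_\D(\Gamma)$ is a nonpositively curved cubed complex, so $L_0$ is homeomorphic to such a complex.

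Second, I would treat the remaining components using part~(iii) of Theorem~\ref{spaces++1}: every other component of $L({\Gamma})$ is homeomorphic to $L_0(\Gamma^C)$ for some set $C$ of independent odd cycles in~${\Gamma}$, where $\Gamma^C$ is the subgraph obtained by deleting the vertices of $C$ and their incident edges. Since $\Gamma^C$ is itself a finite graph, the first step applies verbatim: $L_0(\Gamma^C) \approx X_\D(\Gamma^C)$ is homeomorphic to the nonpositively curved cubed complex $X_\D(\Gamma^C)$.

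Finally, for asphericity I would invoke Gromov's theorem as stated in Section~\ref{Cubed complexes}. Because ${\Gamma}$ is finite, each $\Gamma^C$ is finite, so each dimer complex $X_\D(\Gamma^C)$ is a finite (in particular finite-dimensional) cubed complex; its universal cover is therefore a CAT(0)-space and hence contractible. Consequently each connected component of $X_\D(\Gamma^C)$ is an Eilenberg--MacLane space, i.e.\ aspherical. Pulling this back through the homeomorphisms above yields asphericity of every component of $L({\Gamma})$. There is no real obstacle here; the only thing to be careful about is to note that finiteness of~${\Gamma}$ guarantees the finite-dimensionality hypothesis needed to apply Gromov's theorem.
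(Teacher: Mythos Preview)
Your argument is correct and is exactly the intended one: the corollary is an immediate consequence of Theorem~\ref{spaces++1} (identifying each component of $L(\Gamma)$ with a dimer complex $X_\D(\Gamma^C)$), Theorem~\ref{sspeccaseW} (nonpositive curvature of dimer complexes), and the Gromov asphericity argument from Section~\ref{Cubed complexes}, with finiteness of~$\Gamma$ supplying finite-dimensionality. The paper does not spell this out further; your write-up simply makes the logic explicit.
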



 \section{Extension to hypergraphs}\label{extension}

We extend the   definition of dimer groups and matching groups   to hypergraphs.

\subsection{Hypergraphs}\label{hyper} By a {\it hypergraph} we mean a triple $\Gamma=(E,V, \partial)$ consisting of two  sets $E$, $V$  and a map $\partial:E\to 2^V$ such that $\cup_{e\in E} \, \partial e=V$ and $\partial e\neq \emptyset$ for all $e\in E$. The elements of $E$ are  {\it edges} of $\Gamma$, the elements of $V$ are  {\it vertices} of $\Gamma$, and~$\partial$ is  the {\it boundary map}. For $e\in E$, the elements of $\partial e\subset V$ are   the   {\it vertices of} $e$.

We briefly discuss a few examples. A graph gives rise to  a hypergraph in the obvious way. Every matrix $M$ over an abelian group yields a hypergraph whose edges are   non-zero rows of $M$, whose vertices are   columns of~$M$, and whose boundary map carries a row   to the set of   columns  containing   non-zero entries of this row. A CW-complex gives rise to a sequence of hypergraphs associated as above with the matrices of the boundary homomorphisms in the   cellular chain complex. Every hypergraph $(E,V, \partial)$ determines the {\it dual hypergraph} $(V, E, \partial^*)$ where
$\partial^*(v) =\{e\in E\,\vert\, v\in \partial (e)\}$ for all $v\in V$.


 \subsection{The gliding system}\label{cychyper}  Given a  hypergraph $\Gamma=(E,V, \partial)$, we call two sets $s,t\subset E$ {\it independent} if $\partial s\cap \partial t=\emptyset$. Of course, independent sets are disjoint.

  A {\it cyclic set of edges} in  $\Gamma $
  is a  finite set $s\subset E$ such that for every $v\in V $, the set $\{e\in s\,\vert\, v\in  \partial e\}$   has two elements or is empty. A cyclic set of edges  is a {\it cycle} if
 it does not contain smaller non-empty cyclic sets of edges.

  \begin{lemma}\label{cycliccersMM} If $s\subset E$ is a cyclic set of edges, then the cycles contained in $s$ are pairwise independent and $s$ is their   disjoint union.
\end{lemma}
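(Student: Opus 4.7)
The plan is to prove two things: existence of a decomposition of $s$ into pairwise independent cycles, and then the identification of those pieces with \emph{all} cycles contained in $s$.

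The first building block will be a local counting lemma: if $c \subseteq s$ is a non-empty cyclic subset of the cyclic set $s$, then $\partial c \cap \partial(s \setminus c) = \emptyset$. Indeed, a vertex $v$ in this intersection would satisfy $v\in \partial c$, hence be incident to exactly two edges of $c$, while simultaneously being incident to at least one edge of $s\setminus c$; this yields $\ge 3$ edges of $s$ incident to $v$, contradicting the cyclicity of $s$. As an immediate consequence, $s\setminus c$ is itself cyclic (the count of edges of $s\setminus c$ incident to any $v\in V$ is either $0$, when $v\in\partial c$, or the same as the count for $s$ otherwise), and $c$ and $s\setminus c$ are independent.

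Next, I would establish existence by induction on $\card(s)$. If $s=\emptyset$ there is nothing to prove. If $s\neq \emptyset$, pick a non-empty cyclic subset of $s$ of minimal cardinality; by definition of a cycle this is a cycle $c_1\subseteq s$. By the local lemma, $s\setminus c_1$ is cyclic, is independent from $c_1$, and is strictly smaller than $s$, so by the induction hypothesis it decomposes as a disjoint union of pairwise independent cycles $c_2,\ldots,c_n$. Since each $c_i$ ($i\geq 2$) has $\partial c_i \subseteq \partial(s\setminus c_1)$, which is disjoint from $\partial c_1$, the family $c_1,c_2,\ldots, c_n$ is pairwise independent and $s=c_1\sqcup\cdots\sqcup c_n$.

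The main obstacle---and the place the argument becomes least automatic---is showing that every cycle $c\subseteq s$ coincides with one of the $c_i$, so that $\{c_1,\ldots,c_n\}$ \emph{is} the set of cycles contained in~$s$. Here is how I would handle it. Fix a vertex $v\in\partial c$; there are exactly two edges of $c$ containing $v$, and each of them lies in some $c_i$. If these two edges lay in distinct components $c_i$ and $c_j$, then $v\in\partial c_i\cap\partial c_j$, contradicting the independence of the decomposition. Hence both edges of $c$ at $v$ lie in the same $c_{j(v)}$, and they are forced to be exactly the two edges of $c_{j(v)}$ at $v$ (since $c_{j(v)}$ is cyclic). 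Using this I would verify that $c\cap c_j$ is cyclic for each $j$: a vertex $v\in\partial(c\cap c_j)$ forces $j(v)=j$, which forces both edges of $c$ at $v$ into $c\cap c_j$. Since $c$ is a cycle (a minimal non-empty cyclic set), each $c\cap c_j$ is either $\emptyset$ or equal to $c$; they cannot all be empty as $c\neq \emptyset$, so $c\subseteq c_j$ for some $j$, and minimality of $c_j$ now forces $c=c_j$. This finishes the proof.
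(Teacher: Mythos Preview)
Your proof is correct. The local counting lemma is the key observation in both arguments, and your induction together with the careful verification that every cycle in $s$ must equal one of the $c_i$ is sound.

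The paper takes a different, more compressed route. It defines the relation $e\sim f$ on $s$ by $\partial e\cap\partial f\neq\emptyset$, observes that this (reflexive, symmetric, possibly non-transitive) relation generates an equivalence relation on $s$, and asserts that the equivalence classes are exactly the cycles contained in $s$. Disjointness, covering, and pairwise independence of the classes are then automatic from the equivalence-class picture. The verifications that each class is cyclic, that each class is minimal, and that every cycle in $s$ is a single class are all left implicit; unwinding them uses exactly your local counting lemma. So the paper's argument is conceptually cleaner (one equivalence relation does everything at once), while yours is more explicit and self-contained, separating existence from the characterisation of all cycles. Both rely on the same underlying combinatorics.
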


\begin{proof}   Define a  relation $\sim$ on  $s$  by $e\sim f$ if $e,f\in s$ satisfy $\partial e\cap \partial f\neq \emptyset$. This relation is reflexive and symmetric but possibly non-transitive. It generates an equivalence relation on $s$; the corresponding equivalence classes  are    the cycles contained in $s$.
 This   implies   the lemma.
\end{proof}

   A cycle $s\subset E$ is {\it even} if $s$ has a partition into two   subsets  called the {\it halves}  such that    the edges belonging to the same half have no common vertices.
  It is easy to see  that if such a partition  $s=s' \cup  s''$   exists, then it  is unique and $\cup_{e\in s'} \partial e =\cup_{e\in s''} \partial e $.


Even cycles   as glides  with the
independence relation   above form   a
  regular   gliding system in  the power group $G=2^E$.
By Corollary~\ref{simpleNEWBIS}, the associated cubed complex $ X_G=X_G({\Gamma})$ (with $0$-skeleton $G $)   is
nonpositively curved. As in Section~\ref{theglidingsystemcycles}, a choice of a distinguished element in each even cycle    determines an orientation of $G$ and a typing homomorphism $\mu_A:\pi_1(X_G, A)\to \A(G) $ for   $A\in  G$.
If   $E $ is  finite, then      the right-angled Artin  group    $\A(G) $ is    finitely generated and $\mu_A$ is an injection.

\subsection{Matchings}\label{dimhyper}  A {\it   matching}  in a  hypergraph $\Gamma=(E,V, \partial)$     is a set $A\subset E$ such that $\partial e \cap \partial f=\emptyset$ for any distinct $e,f\in A$.
 A matching $A $ is {\it perfect}
   if  $\cup_{e\in A} \, \partial e=V$.

Let ${\D}=\D(\Gamma) $
be the set   of all perfect matchings of $\Gamma$.  The same arguments as in   the proof of Lemma~\ref{dimersMM}  show that
 ${\D}\subset G=2^E$  satisfies   the square condition and the cube condition.
 The associated
cubed complex $X_{\D}   \subset X_G$     is      the {\it dimer complex} of
$\Gamma$.  Both $X_G$ and $X_\D$ are
nonpositively curved. For $A\in \D$, the group $\pi_1(X_\D, A)$ is the {\it dimer group} of~$\Gamma$ at~$A$. If the   sets   $E$ and~$V$ are finite, then   $X_\D$ is connected;  its  fundamental
group is the {\it dimer group} of~$\Gamma$.

Given a matching $A\subset E$, we define a hypergraph $\Gamma_A=(E_A,V_A, \partial_A)$ where $V_A= \cup_{e\in A} \partial e  $, $E_A=\{e\in E\, \vert \, \partial e\subset V_A\}$, and $\partial_A$ is the restriction of $\partial$ to $E_A$.
Clearly,  $ A\subset E_A$ is a perfect matching in~$\Gamma_A$. The group $\pi_A=\pi_1(X_{\D} (\Gamma_A), A)$ is the {\it matching group} of~$\Gamma$ at~$A$.

  With these definitions, all the results above concerning the dimer groups and the matching groups in graphs extend to
 hypergraphs with appropriate changes. We leave the details to the reader.

\appendix
\section{Typing homomorphisms re-examined}\label{verynewnewDimer coverings++++++++}

 We  state   several    properties of   the  typing homomorphisms defined  in Section~\ref{prelimArtin++}.
Let:  $\Gamma$ be a  graph with the set of edges~$E$;  $G=2^E$ be the power group  of~$G$; and $\D\subset G$ be the set of perfect matchings of~$\Gamma$.
By Section~\ref{theglidingsystemcycles}, a choice of an element $e_s\in s$ in each even cycle~$s$  in~${\Gamma}$  determines an orientation on $G$.  For  $A\in  G$,  consider the corresponding typing  homomorphism $\mu_A:\pi_1(X_G, A)\to \A  $
where $\A=\A(G) $ is the right-angled Artin  group associated with the even-cycle gliding system in~$G$.
 Composing~$\mu_A$
with the inclusion homomorphism $\pi_A =\pi_1(X_{{\D}}, A) \to \pi_1(X_G, A)$ we obtain a homomorphism  $\pi_A \to \A $ also denoted~$\mu_A$. The same homomorphism is obtained by restricting the orientation on~$G$ to~$\D$ and taking the associated typing homomorphism.  If   $E $ is  finite, then $\mu_A:\pi_A \to \A $ is an injection.

If   $e^1_s, e^2_s\in s$  belong to the same half of~$s$ for each even cycle  $s$ in~$\Gamma$, then the families $\{e^1_s\in s\}_s$ and $\{e^2_s\in s\}_s$    determine the same orientation  on $G$ and on~$ \D$. Thus, to specify an orientation  on $\D$ and the typing homomorphism $\mu_A: \pi_A  \to \A$ it suffices to specify a   half in each   $s$. When the distinguished half of an even cycle~$s_0$  is replaced with the complementary half,
the   homomorphism $\mu_A $ is replaced by  its composition with the automorphism of $\A$ inverting the generator $g_{s_0}\in \A$  and fixing  the generators $g_s\in \A$ for  $s\neq s_0$.



Let $\B$ be the  right-angled Artin group
  with generators $\{h_e\}_{e \in E }$ and    relations $h_e h_f=
  h_f h_e$ for all edges $e,f\in E$ having no common vertices. Choose in each even cycle $s$ in $\Gamma$   a half $s'\subset s$ and consider the associated typing homomorphism $\mu_A: \pi_A  \to \A$ with  $A\in \D$. The formula $u(g_s)= \prod_{e\in s\setminus s'} h_e ^{-1} \prod_{e\in s'} h_e  $ defines a homomorphism $u:\A\to \B$. We claim that $u\mu_A=1$. To see this,    use~\eqref{pathee} to compute   $ \mu_A$ on a path~$\alpha$ in~$X_\D$ formed by $n\geq 0$ consecutive 1-cells $e_1,..., e_n$. If $A_k\in \D$ is the terminal endpoint   of
$e_k$  and the initial endpoint of $e_{k+1}$, then $$u(g_{\vert e_k\vert}^{\nu_k})= \prod_{e\in A_{k-1}} h_e ^{-1} \prod_{e\in A_k} h_e.$$
 Multiplying   over  $k=1, \ldots , n$,  we obtain $u\mu_A(\alpha)= \prod_{e\in A_{0}} h_e ^{-1} \prod_{e\in A_n} h_e$. For $ A_0=A_n=A$,   this gives $u\mu_A(\alpha)=1$.

 The  homomorphism $\mu_A : \pi_A \to \A $  induces a  homomorphism in cohomology (with any coefficients) $\mu_A^\ast:H^\ast(\A)\to  H^\ast(\pi_A )$. This may give non-trivial cohomology classes of $\pi_A$. The algebra $H^\ast(\A)$  can be computed from  the    fact that the  cells  of the Salvetti complex  appear in the form of tori and so, the boundary maps in the cellular chain complex are zero    (see, for example,  \cite{Ch}).  The equality $   u \mu_A =1$  above  implies that $\mu_A^\ast$ annihilates   $u^*( H^\ast(\B))\subset   H^\ast(\A)$.


                     \end{document}